\def\eqref#1{equation~\ref{#1}}
\def\1{\bm{1}}
\DeclareMathAlphabet{\mathsfit}{\encodingdefault}{\sfdefault}{m}{sl}
\SetMathAlphabet{\mathsfit}{bold}{\encodingdefault}{\sfdefault}{bx}{n}
\newcommand{\E}{\mathbb{E}}
\newcommand{\R}{\mathbb{R}}
\DeclareMathOperator*{\argmax}{arg\,max}
\title{Dynamic Subgoal-based Exploration\\ via Bayesian Optimization}
\author{\name Yijia Wang \email yiw94@pitt.edu \\
       \addr University of Pittsburgh
       \AND
        \name Matthias Poloczek\thanks{The work was done before Matthias joined Amazon.}
        \email matthias.poloczek@gmx.de \\
       \addr Amazon
       \AND
       \name Daniel R. Jiang \email drjiang@meta.com \\
       Meta AI, University of Pittsburgh}
\begin{document}

\maketitle

\begin{abstract}
Reinforcement learning in sparse-reward navigation environments with expensive and limited interactions is challenging and poses a need for effective exploration. Motivated by complex navigation tasks that require real-world training (when cheap simulators are not available), we consider an agent that faces an unknown distribution of environments and must decide on an exploration strategy. It may leverage a series of training environments to improve its policy before it is evaluated in a test environment drawn from the same environment distribution. Most existing approaches focus on fixed exploration strategies, while the few that view exploration as a meta-optimization problem tend to ignore the need for \emph{cost-efficient} exploration. We propose a cost-aware Bayesian optimization approach that efficiently searches over a class of dynamic subgoal-based exploration strategies. The algorithm adjusts a variety of levers --- the locations of the subgoals, the length of each episode, and the number of replications per trial --- in order to overcome the challenges of sparse rewards, expensive interactions, and noise. An experimental evaluation demonstrates that the new approach outperforms existing baselines across a number of problem domains. We also provide a theoretical foundation and prove that the method asymptotically identifies a near-optimal subgoal design.
\end{abstract}

\section{Introduction}

Reinforcement learning (RL) is becoming the standard for approaching control problems  -- usually modeled by a Markov decision process (MDP) -- in environments whose dynamics are unknown and learned from data.  
In many applications involving navigation tasks, rewards are sparse and delayed. Since most RL algorithms rely, at least initially, on random exploration, this can cause an agent to require a large, often impractical number of interactions with the environment before obtaining any rewards.  Simultaneously, in real-world settings, it is often the case that fast and cheap interactions with the environment are not available, making it nearly impossible to apply RL algorithms. To address the two issues of sparse rewards and expensive interactions in navigation tasks, our objective in this paper is to design methods for learning better exploration policies in a \emph{cost-efficient} manner: specifically, we propose a Bayesian optimization approach to optimize an exploration strategy based on \emph{subgoals}, where each \emph{subgoal} is defined as a set of states that the agent must reach, serving as an intermediate target for the agent to ``complete'' before navigating to the primary goal.

\looseness-1 An illustrative example comes from the field of robotics: autonomous systems have long been used to explore unknown or dangerous terrains \citep{matthies1995mars,apostolopoulos2001robotic, ferguson2004autonomous,thrun2004autonomous}.
Policies learned offline (e.g., via a simulator) are common in these situations, but it may be beneficial to introduce agents that execute an offline-learned exploration policy to guide the learning of an \emph{online} policy that can better tailor to the details of the test environment. An example of this general idea can be found in \cite{matthies1995mars}, which describes the design of a rover for the Mars Pathfinder mission. One of the main tasks is navigating the rover in a rocky terrain and reaching a goal (the test environment). To train for the eventual mission, the engineers utilized an ``indoor arena'' that mimics the test environment. The need for cost-efficient training also arises in the setting of safe robot navigation \citep{oliveira2020bayesian}.
Existing approaches to exploration have largely ignored the need to be cost-efficient during the training process and therefore are challenging to apply to real-world scenarios (see Section \ref{sec:related} for a detailed discussion of related work).

In our setup, an agent is given a fixed (and small) number of opportunities to train in environments randomly drawn from a distribution $\Xi$ (henceforth, we refer to these as ``training environments''), with the caveat that each interaction in the training environment \emph{incurs a cost}. After these opportunities are exhausted, the agent enters a random \emph{test environment} $\xi \sim \Xi$ and executes an underlying RL algorithm to adapt to the particulars of $\xi$, while aided by the higher-level exploration strategy learned for $\Xi$. One can view this formulation as a meta-optimization problem with two levels: an upper-level problem to select an exploration strategy, represented by parameters $\theta$, and a lower-level RL task that explores with the help of the exploration strategy $\theta$ on an environment instance $\xi \sim \Xi$.

\begin{figure}[th]
	\centering
	\begin{subfigure}{0.23\textwidth}
		\includegraphics[width=1\linewidth]{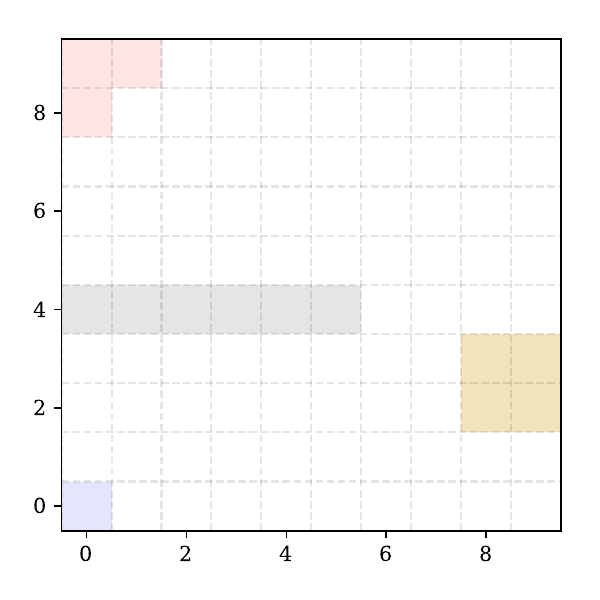}
		\caption{Original}
		\label{fig:ex1}
	\end{subfigure}
	\begin{subfigure}{0.23\textwidth}
		\includegraphics[width=1\linewidth]{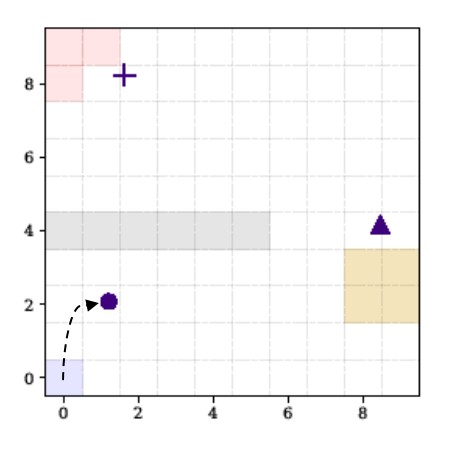}
		\caption{First subgoal}
		\label{fig:ex2}
	\end{subfigure}
	\begin{subfigure}{0.23\textwidth}
		\includegraphics[width=1\linewidth]{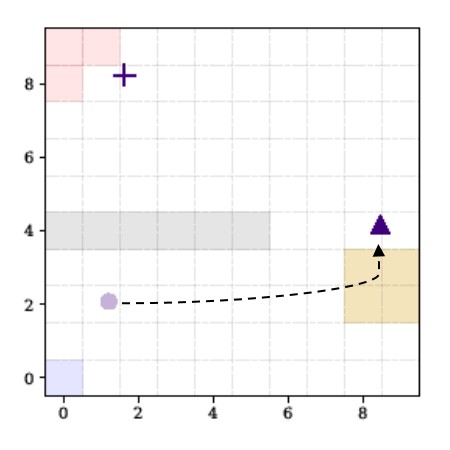}
		\caption{Second subgoal}
		\label{fig:ex3}
	\end{subfigure}
	\begin{subfigure}{0.23\textwidth}
		\includegraphics[width=1\linewidth]{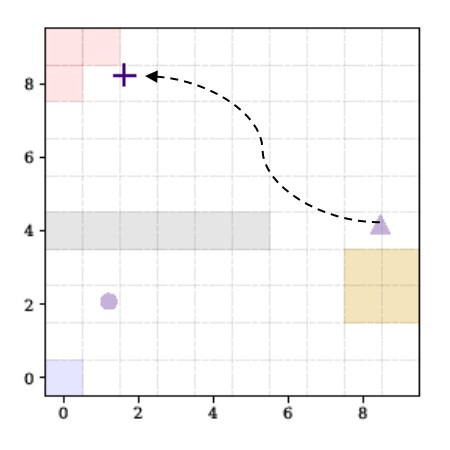}
		\caption{Third subgoal}
		\label{fig:ex4}
	\end{subfigure}
	\caption{Example of a dynamic subgoal exploration strategy. The first, second, and third subgoals are denoted by the circle, triangle, and cross, respectively. The blue square is the starting location of the agent, the grey region is a wall, the yellow region is the location of the key, and the red region is the door (goal). Note that although the second subgoal is not exactly in the location of the key, it brings the agent to the correct vicinity, allowing the underlying RL algorithm (executed online) to further adapt to the environment's particular details.}
	\label{fig:example}
\end{figure}

We propose optimizing over a class of \emph{dynamic subgoal exploration strategies} in the upper-level optimization problem. To illustrate this concept, consider the sparse-reward environment shown in Figure \ref{fig:ex1}, where an agent is tasked with picking up a ``key'' in the yellow region, in order to exit the ``door'' in the red region. The grey region is a wall. An RL algorithm paired with a naive exploration strategy making use of random actions (such as $\epsilon$-greedy) requires a prohibitively large number of random actions before finding a suitable path to the door through the key, while avoiding the wall. A dynamic subgoal strategy is an \emph{ordered} set of subgoals (along with associated rewards leading to each subgoal, omitted here for illustrative clarity) that leads the agent on a trajectory where the underlying RL algorithm is \emph{more likely to discover} the optimal behavior. Figures \ref{fig:ex2}-\ref{fig:ex4} together show an example of a dynamic subgoal exploration with three subgoals, which first leads the agent to the vicinity of the key and later towards the door. Note that the situation here in Figure \ref{fig:example} is simplified in that we are actually interested in finding dynamic subgoal strategies that work on average across a distribution of environments, rather than a single environment.

\begin{figure*}[tb]
	\centering
	\includegraphics[width=0.99\linewidth]{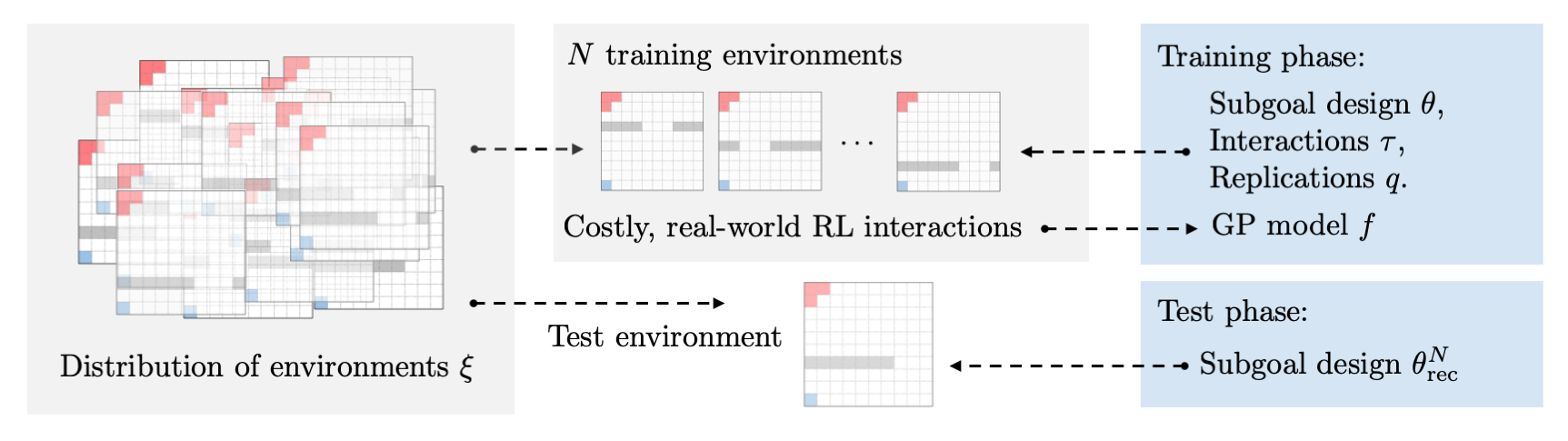}
	\caption{Outline of the \ALGO algorithm. During the training phase \ALGO optimizes an exploration strategy (represented as subgoals) on sampled training environments. It then utilizes the learned subgoal design as an exploration strategy in the test environment to train an effective policy within a limited number of interactions.}
	\label{fig:besd}
\end{figure*}

\subsection{Our Contributions}
\looseness-1 Our main contributions are as follows. We first propose a framework for \emph{cost-efficient learning} of a dynamic subgoal exploration strategy for a distribution of environments; in other words, interactions with the environment are expensive \emph{during training}, making most gradient-based approaches infeasible. We instead leverage the Bayesian optimization (BO) paradigm, a well-known class of sample-efficient optimization techniques \citep{brochu2010tutorial, snoek2012practical, herbol2018efficient, frazier2018tutorial}, and propose a new acquisition function as a core ingredient of our approach. The Gaussian process (GP) surrogate model used by the BO formulation has the ability to reason about the \emph{learning curve} of the underlying RL algorithm, enabling us to introduce two additional levers in the BO learning process to improve cost-efficiency: (1) how long to run each episode of training, (2) the number of replications to run in each training environment. These levers allow us to intelligently trade-off running a longer trial versus more replications of shorter trials; the motivation is that, given $\tau_1 < \tau_2$, an accurate evaluation of a particular exploration strategy $\theta$ after $\tau_1$ steps may be more informative than a noisy evaluation of $\theta$ after $\tau_2$ steps, even though the same number of environment interactions are used in both cases. The proposed algorithm, \emph{Bayesian exploratory subgoal design} (\ALGO), is outlined in Figure~\ref{fig:besd}. We also prove an asymptotic guarantee on the quality of the solution found by our approach, compared to the best possible subgoal-based exploration strategy within a given parameterized class.

\section{Related Work} \label{sec:related}

Our framework of cost-efficient learning of exploration strategies through BO appears to be distinct from existing formulations in its strong focus on expensive environmental interactions \emph{during training}, made possible through the additional control levers of episode length and number of replications. Nevertheless, our work is related to a number of distinct areas of study: Bayesian optimization, exploration for RL, intrinsic reward and reward design in RL, multi-task RL, and transfer learning. Here, we attempt to give a tour through the various strands of relevance in each field.

\subsection{Bayesian Optimization}
BO is a technique for optimizing black-box functions in a sample-efficient manner, in particular for tuning ML models and design of experiments \citep{snoek2012practical, brochu2010tutorial, frazier2018tutorial, herbol2018efficient}.
BO methods for problems with multiple information sources or fidelities \citep{swersky2013multi, swersky2014freeze, feurer2015initializing, domhan2015speeding, li2017hyperband} is especially relevant to our proposed method's ability to select the length of an RL training episode, which builds upon ideas from \cite{picheny2013nonstationary}, \cite{poloczek2017multi}, and \cite{klein2017fast}. The first paper proposes fitting GP to partially converged simulations, and the latter two propose acquisition functions that consider the ratio of ``information gain'' to cost of evaluation. Our approach also reasons about multiple replications in an environment, similar to the problem studied in \cite{binois2019replication} in the context of computer experiments. Our work fills a gap in the BO literature where the length of training and number of replications are \emph{selected jointly} in a cost-aware setting, a natural and powerful idea that has not been considered in the literature. Our theoretical analysis builds upon techniques developed in \cite{frazier2008knowledge} and \cite{poloczek2017multi} but extend them in new directions, accounting for the ability to select the number of replications, and providing a characterization of the asymptotic suboptimality due to using a discretized domain.\footnote{Discretizing the domain is a common computational technique used when optimizing complex acquisition functions, but we improve upon the existing theoretical analysis in \cite{poloczek2017multi} with an explicit characterization of the induced error.}

BO has previously been applied in the setting of navigation planning. \cite{martinez2007active}, \cite{martinez2009bayesian}, and \cite{binney2013optimizing}  use BO to optimize a sequence of waypoints for a robot to follow. While our method similarly optimizes a sequence of subgoals, we use the subgoals as an exploration strategy (over a distribution of environments) on top of an existing RL algorithm, rather than as a \emph{direct specification} of the control policy. In order to allow subgoals to provide exploration in a ``plug-and-play'' manner for existing RL algorithms, our approach also features a novel integration of subgoals with potential-based intrinsic rewards.

In two other works, \cite{tesch2011adapting} and \cite{garcia2020robust}, BO is directly applied to optimize a parameterized policy, but this is limited to low-dimensional parameterizations of the policy: \cite{tesch2011adapting} tune a two-dimensional gait parameter, while \cite{garcia2020robust} tune four policy parameters. In our work, we augment an underlying RL algorithm that can learn arbitrary policies with a BO-optimized low-dimensional exploration strategy, striking a balance between flexibility and cost-efficiency.

\subsection{Exploration in Reinforcement Learning}
\looseness-1 Naive exploration strategies such as $\epsilon$-greedy can lead to unreasonably large data requirements, making exploration a commonly studied topic in RL. Most existing work focus on proposing a fixed exploration strategy that is executed for a single underlying environment. For example, some previous related work employ approaches based on optimism \citep{kearns2002near, stadie2015incentivizing, bellemare2016unifying, tang2017exploration} and posterior sampling \citep{osband2016generalization, russo2014learning, osband2017posterior, morere2018bayesian} to guide exploration. 
 Others insert an active learning \citep{shyam2019model} or experimental design \citep{mehta2021experimental}
perspective into the model-based RL framework.

Our work departs from these existing studies in that we formulate the problem of exploration as a meta-optimization over a parameterized class of exploration strategies and aim to find a suitable strategy for a distribution of environments. A more closely related paper is \cite{gupta2018meta}, which extends the model-agnostic meta-learning (MAML) approach of \citep{finn2017model} to the problem of exploration for a set of tasks in a way that is similar in spirit to our formulation. However, their gradient-based approach is not sample-efficient and they do not consider costly environment interactions during training. In addition, \cite{gupta2018meta} make use of task-specific parameters during training, limiting their approach to a small set of environments. For a more comprehensive list of methods for exploration in RL, we refer the reader to the excellent survey of \cite{amin2021survey}.

\subsection{Hierarchical Reinforcement Learning and Options}
Our proposed approach is related to the hierarchical reinforcement learning (HRL) framework, which refers to methods that decompose a complex, long-horizon problem into smaller subtasks; see \cite{barto2003recent} and \cite{pateria2021hierarchical} for extensive reviews of the topic. A well-known type of HRL is \emph{feudal reinforcement learning}, introduced in \cite{dayan1992feudal}, where a high-level manager delegates low-level workers to complete subtasks. Examples of more recent work that follow this feudal hierarchy paradigm include \cite{kulkarni2016hierarchical}, \cite{levy2017learning}, and \cite{nachum2018data}. Our work exhibits a similar flavor in that a high-level BO method sets a subgoal-based exploration strategy, which is then executed by the underlying RL algorithm.
 
\looseness-1 The concept of options, which are temporally extended actions represented as a policy and a termination condition, also fall under the HRL framework. Options can improve the efficiency of RL through the use of previously acquired ``skills'' \citep{sutton1999between,precup1998theoretical}. These skills might be acquired with the help of a human, either fully user-specified (e.g., \cite{jothimurugan2021abstract}) or obtained from expert demonstrations (e.g., \cite{pan2018human}, \cite{paul2019learning}). In this paper, a subgoal is a particular type of option and therefore, our dynamic subgoal exploration strategy can be thought of, at a high level, as a sequence of options.

Of particular relevance to our work is when options are automatically discovered, a problem that is well-known to be challenging. One stream of work views option discovery to be (at least somewhat) detached from the RL reward maximization objective, using state visitation frequencies \citep{stolle2002learning,mcgovern2001automatic,goel2003subgoal}, clustering \citep{mannor2004dynamic}, novelty \citep{csimcsek2004using}, local graph partitioning \citep{csimcsek2005identifying}, or diversity objectives \citep{eysenbach2018diversity,zhang2021hierarchical}, to name a few examples. Approaches that considers a joint objective for option learning RL reward maximization objective like ours \citep{kulkarni2016hierarchical,vezhnevets2016strategic,bacon2017option,frans2017meta,veeriah2021discovery} typically use large, neural network-based representations along with gradient-based (meta-)optimization and do not focus on cost-aware training. The method that we propose in this paper is unique from previous works in that (1) it is designed specifically for the case where cost-aware training is warranted and uses BO for option-learning, (2) it offers an integrated objective for subgoal-design and RL reward maximization, and (3) it uses a novel combination of subgoals and reward shaping, which has a simpler representation than a generic option.

\subsection{Intrinsic Reward and Reward Design}
When a particular subgoal of our proposed dynamic subgoal exploration strategy is active, we ``turn on'' a set of artificial rewards that incentivize the agent to move toward that subgoal (these rewards are then removed after the agent moves on to the next subgoal). Hence, the literature on intrinsic reward and reward design in RL are also relevant. Intrinsic reward (also called \emph{intrinsic motivation}) helps an agent learn increasingly complex behavior in a self-motivated way \citep{randlov1998learning,ng1999policy,huang2002novelty, kaplan2004maximizing, csimcsek2006intrinsic,tenorio2010dynamic,pathak2017curiosity, achiam2017surprise,lample2017playing}. Several works from the \emph{reward design} literature are most closely related to our paper. \cite{sorg2010reward} and \cite{guo2016deep} directly optimize the intrinsic reward parameters, via gradient ascent, to maximize the outcome of the learning process. Similarly, \cite{zheng2018learning} use intrinsic rewards in policy gradient, and treat the parameters of policy as a function of the parameters of intrinsic rewards. Again, these methods differ from ours in that they do not consider the costliness of training and focus on finding intrinsic rewards for a single MDP.

\subsection{Multi-task RL and Transfer Learning}
\looseness-1 Also related to our setting are methods that aim to train agents with the capability of solving (or adapting to) multiple sequential decision making tasks \citep{pickett2002policyblocks,konidaris2006autonomous,wilson2007multi,fernandez2010probabilistic,deisenroth2014multi,doshi2016hidden,finn2017model,finn2017one,pinto2017learning,espeholt2018impala,hessel2019multi,vithayathil2020survey}; such methods generally fall under the umbrella of \emph{multi-task RL} or \emph{transfer learning}. As before, many of these methods require the training of large neural networks and are not designed for a cost-aware setting. Despite their stated purpose of being sample-efficient in adapting to new tasks, most multi-task RL or transfer learning approaches do not place a strong emphasis on cost-efficiency of training on existing tasks. This is an important distinction to our work. The two papers that are closest in spirit to our work are \cite{pickett2002policyblocks}, where macro-actions are extracted from previous tasks, and \cite{konidaris2006autonomous}, where shaped rewards are learned for a set of tasks. One drawback of \cite{pickett2002policyblocks} is that it assumes access to optimal policies for an initial set of MDPs. \cite{konidaris2006autonomous} directly uses previous value functions as shaped rewards (thereby requiring the agent to solve some tasks from scratch) and does not provide an avenue for cost-effective exploration.

\section{Problem Formulation} \label{sec:problem}

This section formulates the problem mathematically, by defining the original (sparse-reward) MDPs and how a dynamic subgoal exploration strategy induces an auxiliary, ``subgoal-augmented'' MDPs. We then describe the iterative training process.

\subsection{Original MDPs $\mathcal M_\xi$ with Sparse Rewards} 
\label{sec:originalmdp}
\looseness-1 Consider a family of MDPs $\{\mathcal M_\xi = \langle\mathcal{S},\mathcal{A},T_\xi,R_\xi,\gamma\rangle\}_{\xi}$ parameterized by a random variable $\xi\sim\Xi$, where $\mathcal{S}$ and $\mathcal{A}$ are the state and action spaces, $T_\xi$ is the transition matrix, $R_\xi: \mathcal{S} \times \mathcal{A} \times \mathcal S \rightarrow \mathbb{R}$ is the extrinsic\footnote{In Section \ref{sec:augmdp}, we describe how a dynamic subgoal exploration strategy supplements the extrinsic reward function with additional \emph{intrinsic} rewards.} reward function, $\gamma \in [0,1]$ is the discount factor\footnote{We allow for the case of episodic MDPs, where $\gamma=1$, provided that any policy will reach a \emph{terminal state} with probability one. A terminal state is absorbing and any action taken in that state gives zero reward.}, and $\Xi$ is the environment distribution (not assumed to be known, nor does it need to be finite or discrete).\footnote{Note that our approach also applies to the case of a \emph{single environment} if the distribution contains only one environment.}
A \emph{sparse-reward environment} is an environment where $R_\xi$ is non-zero only for a small number of ``goal'' states.
To ensure that all quantities are well-defined, we assume that $R_\xi$ is bounded, as is common in the reinforcement learning literature.
We assume common state and action spaces across the distribution of MDPs (i.e., they are independent of $\xi$), while the reward and transition functions vary with $\xi$.

Given $\mathcal{S}$ and $\mathcal{A}$, a \emph{policy} $\pi$ is a mapping such that $\pi(\cdot \, | \, s)$ is a distribution over $\mathcal{A}$ for any state $s\in\mathcal{S}$. 
For any $\xi \sim \Xi$, define the \emph{value function} of policy $ \pi $ at any state $s$ as 
\begin{equation} \label{eq:originalmdp}
 V_\xi^\pi(s) = \E \Biggl[ \sum_{t=0}^\infty \gamma^{t} R_\xi(s_t,a_t,s_{t+1}) \, \Bigl |\, \pi,s \Biggr], 
\end{equation}
where the notation of ``conditioning'' on $\pi$ and $s$ indicates that $s_0=s$ is the initial state and $ a_t \sim \pi(\cdot\,|\,s_t) $. For the MDP $\mathcal M_\xi$, its optimal value function and associated optimal policy are
\[
V_\xi^*(s) = \sup_{\pi} V_\xi^\pi(s) \quad \text{and}  \quad \pi_\xi^*(s) \in \argmax_{a\in\mathcal{A}}\, \E \bigl[ R_\xi(s,a,s') + \gamma V_\xi^{*}(s')\,|\,s,a \bigr].
\]
Now that we have defined the value function, let us comment on the environment distribution $\Xi$. In Section \ref{sec:opt}, we will formulate the meta-optimization problem, which requires that the expected performance of any policy $\pi$ over the environment distribution, i.e., $\mathbb E_\xi [V_\xi^\pi(s)]$, is well-defined. However, since we assumed bounded rewards, implying bounded performance $V_\xi^\pi(s)$, it will always be the case that this expectation exists and we do not require further assumptions on $\Xi$.

When the extrinsic reward function $ R_\xi $ is sparse, it produces little to no learning signal for the agent. 
Under most RL algorithms, the agent essentially performs random exploration and does not start learning until the first time it wanders to the goal. The time it takes to find the goal under a random exploration strategy is often prohibitively long. The $\epsilon$-greedy exploration strategy, which takes a random action with probability $\epsilon$ and the best action under the current value function approximation, is an example of a random exploration strategy.

\subsection{Dynamic Subgoal Exploration Strategies} \label{sec:sg}
An \emph{intrinsic reward} is an artificial reward signal experienced by the agent that does not come directly from the environment. A \emph{subgoal} is defined by a (usually small) set of states, such that when the agent lands in any of them, the subgoal is considered ``completed.'' A \emph{dynamic subgoal exploration strategy} is a sequence of subgoals, along with an associated reward shaping function for each subgoal, that provides an intrinsic reward signal for the agent. If the locations of the subgoals are chosen well, this strategy can help the agent explore the environment efficiently. We call this a \emph{dynamic} strategy because the subgoals are turned on one-by-one and consequently introduces a new state into the MDP (described in detail below).
 
\looseness-1 Suppose there are $K$ subgoals and let $\theta\in \Theta$ be a parameter that fully describes a subgoal exploration strategy, including the subgoal locations, associated rewards, and sequencing. Let $\mathcal G_{\theta, k} \subseteq \mathcal S$ be a set of ``target'' states associated with the $k$th subgoal, for $k \in \{1, 2, \ldots, K\}$, in the sense that if the agent lands in some state in $\mathcal{G}_{\theta,k}$, then the $k$th subgoal is considered ``completed.'' In addition, we define an artificial reward function $g_{\theta, k}(s, s')$ that, when activated, provides a sequence of rewards that leads the agent toward subgoal $k$. Concretely, we use potential-based reward shaping from \cite{ng1999policy} to achieve this. Let $\Phi_{\theta, k}$ be a \emph{potential function}, a function that assigns a value for each state in $\mathcal{S}$, with higher potential indicating a more ``valuable'' state. $\Phi_{\theta, k}$ should have the property that target states in $\mathcal{G}_{\theta, k}$ have the highest potential. Then, let
\begin{equation}
g_{\theta, k}(s, s') = \gamma \Phi_{\theta, k}(s') - \Phi_{\theta, k}(s),
\label{eq:pbrs}
\end{equation}
for all $s, s' \in \mathcal S$. The definition of $g_{\theta, k}(s, s')$ in (\ref{eq:pbrs}) can be interpreted as the difference in potential between states $s'$ and $s$ (with discount $\gamma$). This potential difference motivates the agent to move towards the target states (high potential) of $k$th subgoal. Thus, a parameterization of a set of $K$ subgoals, which forms our exploration strategy, is fully described by
\[
\left(\{\mathcal G_{\theta, k}\}_{k=1}^K, \{g_{\theta, j}\}_{k=1}^K\right),
\]
the locations and associated reward shaping functions.

\begin{example}[Key and Door Environment]
\label{example:maze}
\looseness-1 Let us consider a distribution of maze MDPs with states $\{(i,j)\}_{1 \le i,j \le 10}$ and a sparse reward in the upper left corner at $(0, 10)$. In addition, suppose that the agent needs to pick up a key in order to receive the reward at $(0, 10)$, where the location of the key is uncertain but likely to be in the \emph{right half of the room}. The environment illustrated in Figure \ref{fig:example} can be considered to be one possible realization from this distribution of mazes.
Now, let us consider a subgoal design with $K=3$ subgoals. The simple parameterization $\theta = (i_1,j_1,i_2,j_2,i_3,j_3)$, with 
\[
\mathcal G_{\theta,k} = \{(i_k,j_k)\} \quad \text{and} \quad \Phi_{\theta,k}(s) = e^{- \|s - (i_k,j_k)\|^2}
\]
specifies that for $k \in \{1, 2, 3\}$, the $k$th subgoal is located at a single state $(i_k,j_k)$ and the artificial reward potential is a Gaussian centered at $(i_k,j_k)$. Using Figure \ref{fig:example} as a visual reference, one can imagine that the subgoal design $\theta = (1,2,8,4,2,8)$ would be useful in guiding the agent toward the vicinity of the key on the right side of the room and then toward the vicinity of the goal. Once the agent is in the correct vicinity, the underlying RL algorithm can discover the precise locations of the key and goal in the particular environment realization more quickly.
\end{example}

\subsubsection{Subgoal Parameterization vs State Dimensionality} For the types of navigation tasks that we are concerned with in this paper, the dimension of the subgoal parameterization $\theta$ need not scale with the dimension of the state $s$, which would pose a potential scalability issue. Instead, one general rule-of-thumb to keep in mind is that for a dynamic subgoal exploration strategy to be effective in navigation tasks, the dimension of $\theta$ only needs to scale with the number components of $s$ that pertain to the \emph{spatial positioning of the agent}. The next example provides an illustration.

\begin{example}[Mountain Car Environment, with  $\text{dim}(\theta) < \text{dim}(s)$]
\label{ex:dimmountain}
\looseness-1
Consider the well-known Mountain Car problem, a continuous control task where an underpowered car, operating in a one-dimensional space, must make its way up a steep mountain \cite[Example 10.1]{sutton2018reinforcement}. The state is two-dimensional, $s = (x, \dot{x})$, where $x \in [-1.2, 0.5]$ is the position of the agent while $\dot{x} \in [-0.07, 0.07]$ is its velocity. A possible subgoal design with $K=2$ is $\theta = (i_1,i_2)$, with 
\[
\mathcal G_{\theta,k} = \{(i_k, \dot{x}) \, | \, \dot{x} \in [-0.07, 0.07]\} \quad \text{and} \quad \Phi_{\theta,k}(s) = e^{-(x - i_k)^2}
\]
for each $k$. In other words, the agent reaches a subgoal target state if its position is $i_k$, for any value of its velocity. Also, the artificial reward only depends on the spatial position $x$ rather than the full state $(x, \dot{x})$. In Section \ref{sec:numerical}, we give numerical results for exactly this example.
\end{example}

One could imagine that the concept illustrated in Example \ref{ex:dimmountain} also applies to more complex robotics environments with a high-dimensional state, where the number of components related to the spatial positioning is relatively small. This suggests that the subgoal parameterization (and the resulting BO problem) is often of much lower dimension than that of the state itself.

\subsection{Subgoal-Augmented MDPs $\mathcal M_{\xi, \theta}$}
\label{sec:augmdp}
Now that we have described how a particular subgoal design is parameterized, the remaining question is how these are integrated in a useful way into the original, sparse-reward MDP described in Section \ref{sec:originalmdp}. We propose the notion of a \emph{subgoal-augmented}, auxiliary MDP, where the $K$ subgoals are sequentially ``activated.'' This way, we encode subgoal ordering\footnote{Without ordering, rewards from multiple subgoals can inhibit the agent's progress.} into the exploration strategy, meaning that the agent only moves on to the next subgoal after finishing the current one. 

\looseness-1 Let $\mathcal M_{\xi, \theta}$ denote an auxiliary, subgoal-augmented MDP based on an original MDP $\mathcal M_\xi$, except that it is includes rewards and transitions associated with the dynamic subgoal exploration strategy $\theta$. We introduce an auxiliary state $ i \in \mathcal{I} := \{0,1,\ldots, K\} $, where $i$ represents the number of subgoals reached by the agent so far. Initially, we have $i_0 = 0$. The state of the $\mathcal M_{\xi, \theta}$ is $ (s,i) \in \mathcal{S} \times \mathcal{I}$ and the transition for the auxiliary state is
$i' = i + \mathbf{1}_{\{s' \,\in\, \mathcal{G}_{\theta, i+1}\}}$, where we take $\mathcal{G}_{\theta, K+1}=\varnothing$. This means the auxiliary state $i$ is updated to $i+1$ whenever $s'$ reaches the next subgoal. Let the intrinsic reward of the agent be:
\begin{align*}
G_{\theta}(s_t, i_t, s_{t+1}) = \sum_{k=1}^K \,  \mathbf{1}_{\{k = i_t\}} \cdot g_{\theta, k+1}(s_t, s_{t+1}),
\end{align*}
\looseness-1 where the indicator function encodes the logic that if $i_t$ subgoals have been completed so far, then the current target is subgoal $i_t+1$ and only the rewards leading to subgoal $j+1$ are active. The new reward function consists of both extrinsic $(R_\theta)$ and intrinsic $(G_\theta)$ rewards:
\[
\hat{R}_{\xi, \theta}(s,i,a,s') = R_\theta(s,a,s') + G_\theta(s,i,s').
\]
The value function for the new MDP $\mathcal M_{\xi, \theta}$ is written
\begin{equation}
	\hat{V}_{\xi, \theta}^{\hat{\pi}} (s,i) = \E \biggl[ \sum_{t=0}^\infty \gamma^{t} \hat{R}_{\xi,\theta}(s_t,i_t,a_t,s_{t+1}) \, |\, \hat{\pi},s,i \biggr], 
	\label{eq:aug}
\end{equation}
where $\hat{\pi}( \cdot | s,i)$ is now a policy defined on the new state space $\mathcal{S} \times \mathcal{I}$.

Figure \ref{fig:subgoals} gives an example of the overall setup: Figure \ref{fig:subgoals:gw} shows the original MDP environment $\mathcal M_\xi$, where the dark gray cells are walls, the light gray represent uncertainty in the size of the ``doors'', and the red cells represent goal states (the sparse, extrinsic reward). Figure \ref{fig:subgoals:gw20} shows the possible rewards the agent can encounter in the augmented MDP $\mathcal M_{\xi,\theta}$, for a random selection of subgoals $\theta$. The original sparse reward is represented by the red bar in the corner and the first subgoal is the one that is farther from the goal. Both subgoals are singletons and the potential functions are radial basis functions centered at the subgoal locations, similar to the parameterization described in Example \ref{example:maze}. Note that this randomly selected set of subgoals $\theta$ is not a good exploration strategy for the environment in Figure \ref{fig:subgoals:gw} (as it leads the agent toward a wall), motivating the need for optimizing their locations, as we discuss in the next section.

\begin{figure}[h]
	\centering
		\begin{subfigure}{0.4\textwidth}
	    \centering
		\includegraphics[width=0.8\linewidth]{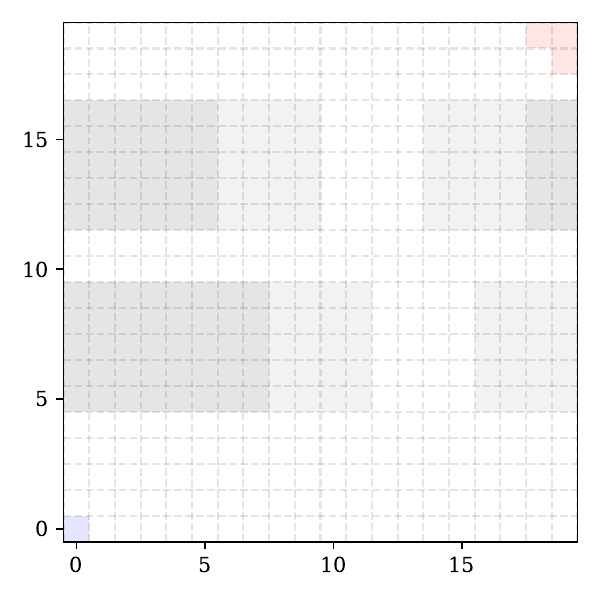}
		\caption{$20\times 20$ Gridworld Environment}
		\label{fig:subgoals:gw}
	\end{subfigure}
	\begin{subfigure}{0.45\textwidth}
	    \includegraphics[width=\linewidth]{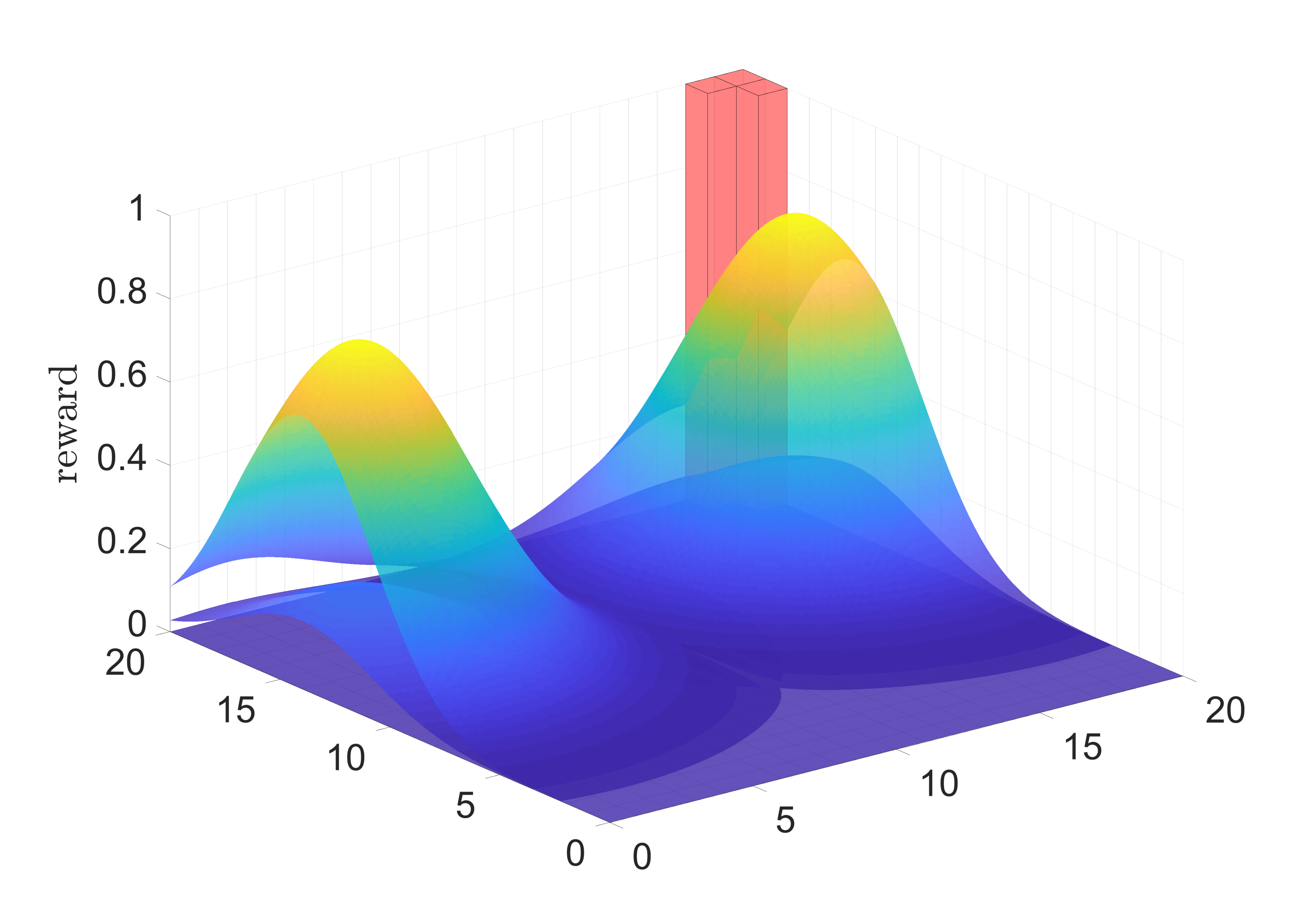}
		\caption{Goal and Subgoal Rewards}
		\label{fig:subgoals:gw20}
	\end{subfigure}
	\caption{An example that visualizes an environment and a random dynamic subgoal exploration strategy along with the rewards of the associated subgoal-augmented MDP.}
	\label{fig:subgoals}
\end{figure}

\subsection{Optimizing the Exploration Strategy} \label{sec:opt}
The selection of the subgoal design $\theta$ depends on the agent's underlying learning algorithm, which could in principle be any RL algorithm where intermediate rewards influence the learning process: this includes any temporal difference-based algorithm that makes use of learned value functions. In the numerical results of Section~\ref{sec:numerical}, our agent learns via $Q$-learning \cite{watkins1992q}. We refer to the underlying RL algorithm as \RL.
Let us use the notation $\RL[\tau, \mathcal M]$ to refer to the policy learned by $\RL$ on MDP $\mathcal M$ after $\tau$ training interactions. We remind the reader that the subgoal-based exploration strategy is fixed \emph{before} the test environment is revealed, so that the sequence of events in the test phase is as follows:
\begin{enumerate} \setlength{\itemsep}{0pt}
    \item A subgoal design $\theta$ for exploration is selected.
    \item The agent is placed in a new environment $\xi$.
    \item The agent uses the subgoal-augmented MDP $\mathcal M_{\xi, \theta}$ and an RL algorithm with a budget of $\tau_\text{max}$ interactions to learn a policy $\RL \bigl[ \tau_\text{max}, \mathcal M_{\xi, \theta} \bigr]$. 
    \item The agent's policy is evaluated using only the extrinsic reward function $R_\xi$ of the original MDP.
\end{enumerate}
Our goal is to find an exploration strategy $\theta \in \Theta$ such that a policy trained using $\theta$ behaves well in the original MDP: 
\begin{equation} 
    \begin{aligned}
    &\max_{\theta \in \Theta} \; \E \biggl[ \sum_{t=0}^\infty \gamma^{t} R_\xi(s_t,a_t,s_{t+1}) \, \bigl |\, \hat{\pi}_{\xi,\theta}^{\tau_\text{max}}, (s_0, i_0) \biggr] \;\; \text{with} \;\; \hat{\pi}_{\xi,\theta}^{\tau_\text{max}} = \RL \bigl[ \tau_\text{max}, \mathcal M_{\xi, \theta} \bigr],
    \end{aligned}
\label{eq:joint_problem}
\end{equation}
where $(s_0, i_0)$ is the initial augmented state.
The interpretation of the objective in (\ref{eq:joint_problem}) is as follows: Evaluate $\hat{\pi}_{\xi,\theta}^{\tau_\text{max}}$ (a policy for the subgoal-augmented MDP) using the same dynamics as the subgoal-augmented MDP, but \emph{without} the rewards associated with the subgoals. In other words, the policy takes actions based on the augmented state $(s_t, i_t)$, but only receives rewards associated with the original MDP. This explains why we need to consider a starting state $(s_0, i_0)$, rather than just $s_0$ (note that the reward does not depend on the auxiliary state $i_t$).

The expectation in (\ref{eq:joint_problem}) is taken over the random choice of a test environment $\xi$, the stochastic dynamics within $\mathcal M_\xi$, and the stochasticity of the learning algorithm itself. It is convenient to explicitly define the following:
\[
u(\theta, \tau) =  \E \biggl[ \sum_{t=0}^\infty \gamma^{t} R_\xi(s_t,a_t,s_{t+1}) \, \bigl |\, \hat{\pi}_{\xi,\theta}^{\tau}, s_0, i_0 \biggr] \;\; \text{with} \;\; \hat{\pi}_{\xi,\theta}^{\tau} = \RL \bigl[ \tau, \mathcal M_{\xi, \theta} \bigr].
\]
Although the objective function in (\ref{eq:joint_problem}) is $u(\theta, \tau_\text{max})$, the notation $u(\theta, \tau)$ will be useful in Section \ref{sec:algorithm}, where we discuss using fewer than $\tau_\text{max}$ interactions to learn about $u(\theta, \tau_\text{max})$ as a way of reducing cost.

\subsection{Iterative Training and Additional Cost-Reduction Levers}
\label{sec:levers}
In our setting, we observe the performance of exploration strategies and the resulting policies in a sequence of training environment realizations $\xi^1, \xi^2, \ldots, \xi^N $ drawn from the MDP distribution $\Xi$. By default, each complete evaluation of the objective function in (\ref{eq:joint_problem}) $u(\theta, \tau_\text{max})$ for a fixed $\theta$ requires running \RL for $\tau_\text{max}$ interactions. Since each interaction in the training environments is expensive (e.g., in robotics applications, this could involve time, labor, and equipment), we want to consider ways to reduce the number of training interactions. To do so, we propose two additional levers:
\begin{enumerate} \setlength{\itemsep}{0pt}
    \item \textbf{Maximum number of interactions.} For each training environment $\xi^n$, we allow the specification of a maximum number of interactions $\tau^n$, chosen from a finite set $\mathcal T \subseteq \mathbb N$. In episodic tasks, if an episode finishes before $\tau^n$ interactions are used, we start a new episode and continue in this manner until exactly $\tau^n$ environment interactions are exhausted. In the next section, we describe our probabilistic model of the RL training curve, which allows observations of short episodes to be informative about the final performance. This also can reduce the risk of spending too many interactions with an unpromising exploration strategy.
    \item \textbf{Multiple replications.} We can reduce the variance of performance observations by averaging over the observed cumulative reward over~$q^n$ i.i.d.\ replications, 
    for a total of $\tau^n q^n$ interactions in training environment $\xi^{n+1}$. Each replication is an independent invocation of an agent. We suppose that $q^n$ is chosen from a finite set $\mathcal Q$. The idea here is that even with the same number of total interactions, a lower variance observation of a ``preliminary'' result could be more informative than a higher variance observation of the ``full'' result.
\end{enumerate}

\looseness-1 To summarize, three decisions are made at the beginning of each training opportunity $n$: (1) a choice of subgoal design $\theta^n$, (2) the maximum number of interactions $\tau^n$, and (3) the number $q^n$ of independent replications to use for this particular $\theta^n$. For each of the $q^n$ replications, we obtain a policy
\[
\hat{\pi}^{\tau^n}_{\xi^{n+1},\theta^n} = \RL \bigl[ \tau^n, \mathcal M_{\xi^{n+1}, \theta^n} \bigr],
\]
before observing a estimate of its performance. After the $q^n$ training replications are complete, we compute the average performance over the $q^n$ replications. Written more succinctly, our observation in episode $n$ takes the form
\[
y^{n+1}(\theta^n,\tau^n,q^n) = u(\theta^n,\tau^n) + \varepsilon^{n+1}_\text{env} + \varepsilon^{n+1}_\text{rep}(q^n),
\]
where $\varepsilon^{n+1}_\text{env}$ represents the deviation from the $u(\theta^n, \tau^n)$ due to the random environment $\xi^{n+1}$, while the observation noise $\varepsilon^{n+1}_\text{rep}(q^n)$ represents the noise that can be reduced via multiple replications, i.e., the noise in $\hat{\pi}^{\tau^n}_{\xi^{n+1},\theta^n}$ due to a sample run of \RL. Thus, $\varepsilon^{n+1}_\text{rep}(q^n)$ depends on the number of replications $q^n$. Naturally, a larger number of replications implies a smaller observation noise. Note that the observations $\{y^n\}$ are i.i.d., since a new MDP is sampled in each iteration. 
\label{obs_noise_is_iid}
The total training cost incurred is cumulative number of interactions: $\sum_{n=0}^{N-1} \tau^n q^n$.

After training opportunities $0, 1, \ldots, N-1$, we reach the \emph{test phase} and commit to a final subgoal design $\theta^{N}_\text{rec}$. This design is evaluated on the test MDP $\xi^{N+1} \sim \Xi$ with an agent that has a full budget of $\tau_\text{max}$ interactions.

\section{Bayesian Optimization for Cost-Efficient Exploration} \label{sec:algorithm}

The setup for BO typically consists of two components: (1) a probabilistic surrogate model (usually a Gaussian process) for modeling the objective function, and (2) an acquisition function, which given a dataset of past observations, assigns a score to each potential observation location \citep{garnett_bayesoptbook_2023}. Selecting the optimizer of the acquisition function as the next point to sample gives rise to strategy for balancing exploration with exploitation. The BO ``loop'' repeats the following: sample a point (using a combination of the current surrogate model and acquisition function), observe new data, and update the surrogate model. For a detailed tutorial, we refer readers to \cite{frazier2018tutorial} and \cite{garnett_bayesoptbook_2023}.

For our setting of learning a dynamic subgoal exploration strategy, we propose a tailored probabilistic model for the RL learning curve and an acquisition function for selecting the next subgoal design, the maximum episode length, and the number of replications to run.  Although shorter episodes and smaller number of replications are more cost-efficient, they also decrease the chance of reaching the goal and produce higher observation noise. Thus, the acquisition function must carefully trade off these downsides with the cost of interactions. We call this the \emph{Bayesian Exploratory Subgoal Design} ($\ALGO$) acquisition function.

\subsection{Surrogate Model} \label{section_model}
\looseness-1 In order to enable the ability to dynamically select the maximum episode length of training, as described in Section \ref{sec:levers}, our approach uses a GP surrogate model over $u(\theta, \tau)$, rather than $u(\theta, \tau_\text{max})$. In other words, our model is a function of both $\theta$ and $\tau$ rather than just $\theta$, enabling it to capture the performance of a policy trained with subgoals $\theta$, for a variety of episode lengths. Assume that $\Theta \subseteq \mathbb R^m$. We place a GP prior~$f$ on the latent function~$u$ with mean function $ \mu: \Theta\times \mathcal{T} \rightarrow \mathbb{R} $ and covariance function $ k: (\Theta\times \mathcal{T}) \times (\Theta\times \mathcal{T}) \rightarrow \mathbb{R}_+ $. 
More precisely, to capture the structure of the RL learning curve, we set~$\mu$ to the mean of an initial set of samples and use a multidimensional product kernel, based on the kernel used in \cite{klein2017fast} for modeling ML performance as a function of parameters and time:
\begin{equation}
k\bigl((\theta,\tau),(\theta',\tau')\bigr) = k_\theta(\theta,\theta') \, k_\tau(\tau,\tau'),
\label{eq:kernel}
\end{equation}
where the first kernel~$ k_\theta$ is the $(5/2) $-Mat\'{e}rn kernel and $k_\tau$ is a polynomial kernel $k_\tau(\tau,\tau') = \phi(\tau)^\intercal \, \Sigma_\phi \, \phi(\tau') $ with $\phi(\tau) = (1,\tau)^\intercal$ and hyperparameters $\Sigma_\phi$. Note that the covariance under~$k$ is large only if the covariance is large under both $k_\theta$ and $k_\tau$. We make the modeling assumption that $\varepsilon^{n+1}_\text{env}$ and $\varepsilon^{n+1}_\text{rep}(q^n)$ are independent, zero mean, and normally distributed\footnote{Although the assumption of normality is commonplace in BO for tractability of the posterior \citep{frazier2018tutorial}, other noise distributions can be used through an appropriate likelihood function.} with variances $\sigma_\text{env}^2$ and $\sigma_\text{rep}^2/q^n$, respectively. This allows us to take advantage of standard GP machinery to analytically compute the posterior on $f$ conditioned on the history after~$n$ steps. This posterior is another GP, whose mean and kernel functions are denoted $\mu^n(\theta, \tau)$ and $k^n( (\theta, \tau), (\theta', \tau'))$; the exact expressions can be found in, e.g., \cite{rw06}.

We remind the reader that the dimensionality of the GP surrogate model is $\text{dim}(\Theta)+1$, i.e., the dimension of the subgoal parameterization, along with an additional dimension for $\tau$. As illustrated in Example \ref{ex:dimmountain}, it will often be the case for navigation domains that the dimension of the subgoal parameterization is smaller than that of the state space of the underlying RL problem (due to the relatively small number of spatial components of the state). Therefore, dynamic subgoal exploration strategies can be tractably modeled and optimized for broad classes of navigation problems, even with vanilla GPs.\footnote{When the need arises to optimize for high dimensional subgoal parameterizations, one may opt for scalable extensions of the model and optimization formulation; see, e.g., \cite{wang2016bayesian,mutny2018efficient,nayebi2019framework,eriksson2019scalable,papenmeier2022increasing,papenmeier2023bounce}. We leave extensions in this direction to future work and focus on a more standard setting.}

\subsection{Acquisition Function} \label{section_aquisition}
As described above, the proposed algorithm proceeds in iterations, selecting one set of subgoals~$\theta^n$ along with $\tau^n$ and $q^n$, to be evaluated in each training environment. We now propose the acquisition function for making these evaluation decisions. An overview of the BO setup is given in Algorithm \ref{alg:example}. 

\begin{algorithm}[tb]
	\caption{Bayesian Exploratory Subgoal Design}
	\label{alg:example}
	\begin{enumerate} \itemsep0em
		\item Set $ n=0 $. Estimate hyperparameters of the GP prior $f$ using initial samples.
		\item Compute next decision $(\theta^{n},\tau^{n},q^{n})$ according to the acquisition function (\ref{eq:acquisition}).
		\item Train in environment $\xi^{n+1}$ augmented with $\theta^n$ ($\mathcal M_{\xi^{n+1}, \theta^{n}}$) using levers $(\tau^{n}, q^{n})$. 
		\item Observe $y^{n+1}(\theta^n, \tau^n)$ %
		and update posterior on $f$.
		\item If $n < N$,  increment $n$ and return to Step 2.
		\item Return a subgoal recommendation $\theta_\text{rec}^{N}$ that maximizes $\mu^N(\theta, \tau_\text{max})$.
	\end{enumerate}
\end{algorithm}

Suppose the training budget is used up after training iterations $0, 1, \ldots, N-1$. Then, the optimal risk-neutral decision is to use subgoals on the test MDP $\xi^{N+1}$ that have maximum expected performance under the posterior. The expected score of this choice is $ \mu_{\ast}^{n} $ where 
\begin{equation} \label{eq:policy}
	\textstyle \mu_{\ast}^{n} := \max_{\theta} \mu^{n}(\theta,\tau_{\text{max}}),
\end{equation}
where $\mu^{n}(\theta,\tau_{\text{max}})= \E_n[f(\theta,\tau_{\text{max}})]$.
Here $\E_n$ is the conditional expectation with respect to the history after the first $n$ observations: $(\theta^0,\tau^0,q^0,y^1,\ldots, \theta^{n-1},\tau^{n-1},q^{n-1},y^{n})$.
Note that although we are allowed to use fewer than $\tau_{\text{max}}$ interactions in training environments to reduce cost, the agent uses its full budget for the test MDP $\xi^{N+1}$.

The proposed acquisition function is based upon the \emph{knowledge gradient}, which is one-step lookahead approach \citep{frazier2008knowledge, frazier2009knowledge}. That means we imagine for each training MDP that it is the last opportunity before the test MDP and act optimally. 
Full lookahead approaches require solving an intractable dynamic programming problem; however, we show that nonetheless, the one-step approach is asymptotically optimal in Theorem~\ref{thm:f_conv} and Theorem~\ref{thm:convergence}.
If we evaluate $(\theta, \tau, q)$, i.e., the subgoals~$\theta$ for $\tau$ steps and $q$ replications, then the expected gain in performance in the test MDP of the recommended exploration strategy after the evaluation, based on (\ref{eq:policy}), with respect to the current best is
\begin{equation*} \label{eq.KG}
	\nu^n (\theta,\tau,q) = \E_n \bigl[ \mu_\ast^{n+1} \mid \theta^{n}=\theta, \tau^{n}=\tau, q^{n}=q \bigr] - \mu_{\ast}^n.
\end{equation*}
Therefore, the one-step optimal strategy is to choose the next subgoals~$\theta^{n}$, maximum episode length $\tau^{n}$, and number of replications~$q^{n}$ so that $\nu^n$ is maximized. However, this strategy would generally allocate a maximum number of steps~$\tau_{\text{max}}$ and replications~$q_{\text{max}}$ for the evaluation of the next subgoal design, as observing $\tau_{\text{max}}$ during training is most informative, and repeating for $q_{\text{max}}$ replications reduces the noise maximally.
In other words, this strategy does not consider the cost of training.

Hence, we propose an acquisition strategy that maximizes the \emph{gain in performance per effort}:
\begin{equation} \label{eq:acquisition}
	(\theta^{n},\tau^{n},q^{n}) \in \argmax_{\theta,\tau,q} \; \frac{\nu^n(\theta,\tau,q)}{q \tau}.
\end{equation}
The optimization problem (\ref{eq:acquisition}) is challenging when the domain $\Theta$ is continuous, so we take the approach of replacing it with a discrete domain $\bar{\Theta}\subseteq\Theta$ (for example, this could be selected by a Latin hypercube design \citep{stein1987large}). This approach has been applied successfully in other knowledge gradient style acquisition functions \citep{scott2011correlated,wu2016parallel, poloczek2017multi}. We provide a novel theoretical guarantee on the asymptotic suboptimality of a discretized optimization domain. 
It characterizes the Lipschitz constant explicitly, thereby improving on the analysis of~\cite{poloczek2017multi}; see Theorem \ref{thm:convergence} in the next section.

\subsection{Theoretical Analysis}
We now provide our main theoretical results on the asymptotic optimality of \ALGO. Detailed proofs can be found in Appendix \ref{sec:proof_conv}. For convenience, we suppose $ \mu(\theta,\tau)=0 $ for all $(\theta,\tau)$, and further assume that the kernel $k(\cdot,\cdot)$ has continuous partial derivatives up to the fourth order. Recall that $\theta_\text{rec}^N\in \bar{\Theta}$ is the final recommendation made in iteration $N$:
\[
\theta_\text{rec}^N \in \argmax_{\theta \in \bar{\Theta}} \; \mu^N(\theta, \tau_\text{max}).
\]
Our first theorem is concerned with the finite, discretized optimization domain $\bar{\Theta}$.
\begin{restatable}{theorem}{asymptoticthm} \label{thm:f_conv}
The acquisition function described in (\ref{eq:acquisition}) has the property of asymptotic optimality with respect to $\bar{\Theta}$, i.e., 
\[
\lim_{N \rightarrow \infty} f(\theta_\text{rec}^N,\tau_\text{max}) =  \max_{\theta \in \bar{\Theta}} f(\theta,\tau_\text{max}),
\]
almost surely. That is, the recommended design $\theta_\text{rec}^N$ becomes optimal as $N \rightarrow \infty$.
\end{restatable}
\looseness-1 If the optimization domain $\bar{\Theta} = \Theta$, then Theorem \ref{thm:f_conv} suffices. Unfortunately, for many applications, the subgoal parameterizations will naturally be continuous. Next, we provide an additive bound on the difference between the solution of \ALGO in $\bar{\Theta}$ and the unknown optimum in $\Theta$, as the number of iterations $N$ tends to infinity.

We use a probabilistic Lipschitz constant of a GP from \cite{lederer2019uniform} to quantify the performance with respect to the full, continuous subgoal parameter space. We make use of the fact that the derivative $df(\theta, \tau_\textnormal{max}) / d\theta_i $ is another GP with covariance 
\begin{align*}
    k^{\partial i}(\theta,\theta') = \frac{\partial^2}{\partial\theta_i \partial\theta'_i} \; k\bigl((\theta,\tau_\text{max}), (\theta',\tau_\text{max})\bigr),
\end{align*}
for all $i=1,2,\ldots,m$ \citep[Section~9.4]{rw06}. See also \cite{ghosal2006posterior} and \cite{wpwf17} for other uses of this property. For each $i=1,2, \ldots, m$, define the constant
\begin{equation}
L_\delta^i = k^\partial_\textnormal{max} \sqrt{2\log\Bigl(\frac{2m}{\delta}\Bigr)} + 12\sqrt{6m} \max \biggl\{ k^\partial_\textnormal{max}, \sqrt{L_k^{\partial i} \, \max_{\theta,\theta' \in \Theta} \operatorname{dist}(\theta,\theta')} \biggr\},
\label{eq:Lconst}
\end{equation}
where $L_k^{\partial i}$ be a Lipschitz constant of the kernel $k^{\partial i}$ and $k^\partial_\textnormal{max} = \max_{\theta \in \Theta} \sqrt{ k^{\partial i}(\theta, \theta)}$.

\begin{restatable}{theorem}{boundedsubopt} \label{thm:convergence}
The acquisition function of (\ref{eq:acquisition}) has bounded asymptotic suboptimality with respect to the original domain $\Theta$ in the sense that with probability at least $1-\delta$, it holds that
	\begin{equation*}
		\textstyle \lim _{N \rightarrow \infty} f(\theta_\textnormal{rec}^N,\tau_\textnormal{max}) \geq \max_{\theta\in\Theta} f(\theta,\tau_\textnormal{max}) - d \cdot \| L_\delta \|
	\end{equation*}
where $d = \max_{\theta\in\Theta} \min_{\theta'\in\bar{\Theta}} \operatorname{dist}(\theta,\theta')$ is a measure on the ``coarseness'' of the discretization and $L_\delta$ is the vector $(L_\delta^1, L_\delta^2, \ldots, L_\delta^m)$, with each $L_\delta^i$ defined as in (\ref{eq:Lconst}).
\end{restatable}

\section{Numerical Experiments} \label{sec:numerical}
We now show numerical experiments to demonstrate the cost-effectiveness of the \ALGO framework. \ALGO is implemented %
using the \texttt{MOE} package \citep{clark2014moe} and the full source code be found at the following URL: \url{https://github.com/yjwang0618/subgoal-based-exploration}. In the experiments that follow, we use the \ALGO approach to optimize dynamic subgoal exploration strategies consisting of two or three subgoals. 

\ALGO is given a few choices for the interaction length $\tau$ and number of replications $q$ (values reported for each benchmark below). Each replication of the \ALGO is given an initial set of 10 observations for each value of $ \tau $ (these initial observations incur interaction costs just like future observations). The potential function at state $s$ with the $j$th subgoal activated is $ \Phi_j(s) = w_1 \exp [-0.5(s-j)^2/w_2] $, where the ``height'' is $w_1=0.2$ and ``width'' is $w_2=10$. The underlying RL algorithm for all environments is Q-learning with an $\epsilon$-greedy behavioral policy (with $\epsilon=0.2$) for all environments.

In the next few subsections, we first introduce each of the environments and along the way, show some qualitative results obtained by \ALGO, focusing on providing intuition. Later, in Section \ref{sec:baselines}, we introduce the baseline algorithms used for an empirical comparison, and in Section \ref{sec:baselinediscussion}, provide a discussion of those results.

\begin{figure}[tb]
	\centering
	\begin{subfigure}{0.45\textwidth}
		\includegraphics[width=1\linewidth]{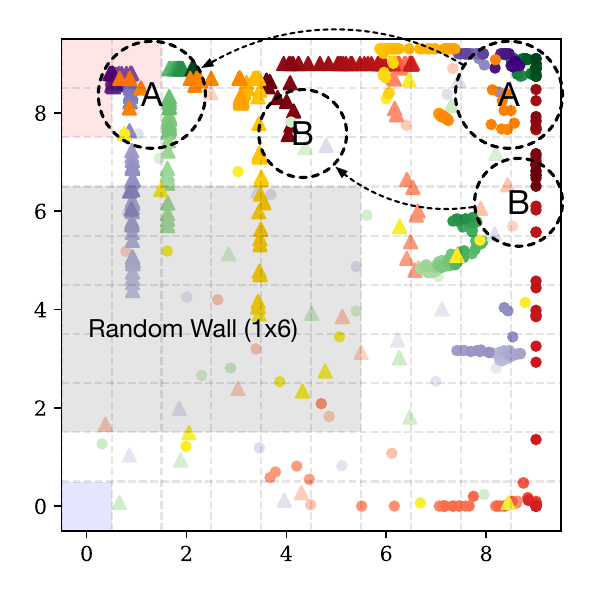}
		\caption{GW10 Domain}
		\label{fig:gw10}
	\end{subfigure}
	\begin{subfigure}{0.45\textwidth}
		\includegraphics[width=1\linewidth]{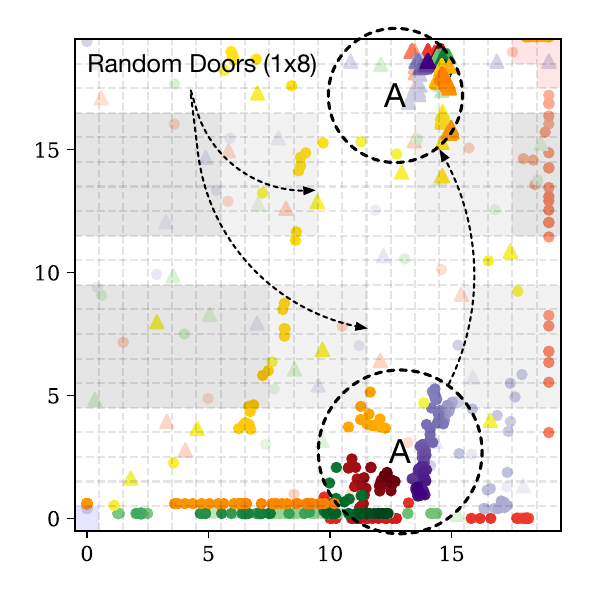}
		\caption{GW20 Domain}
		\label{fig:gw20}
	\end{subfigure}
	\caption{\looseness-1 Recommendation paths for GW10 and GW20. \textcolor{black}{The blue and red shaded regions denote the starting points and goals, respectively. Dark and light gray regions possible locations of walls and doors, respectively. 	Each plot displays four realizations of the ``recommendation paths'' of \ALGO. Each color corresponds to one sample realization, and the color becomes darker as $n$ increases, with the lightest points being the initial samples. The circles and triangles represent the first and second subgoals, respectively, of the exploration strategy. The `A' and `B' labels point out two example sets of subgoals displaying notable behaviors.}}
	\label{fig:recommendation_gw}
\end{figure}

\subsection{Windy Gridworlds with Walls}
The first set of environments (GW10) is a distribution over $ 10\times 10 $ gridworlds, where the goal is to reach the upper left square that is shaded red in Figure~\ref{fig:gw10} to collect a reward of one. The agent starts from the lower-left grid square shaded in blue and may in each step choose an action from the action space consisting of the four compass directions. Each gridworld is partitioned by a wall into two rooms. The wall, randomly located in one of the middle five rows in the gridworld, has a door located on four grid squares on its right. The agent will stay in the current location when it hits the wall. 

There is a small amount of ``wind'' or noise in the transition: the agent moves in a random direction with a probability that is itself uniformly distributed between 0 and 0.02 (thus, a particular environment instance drawn from the distribution has a random wall location and wind probability). 

We use $\mathcal{T}=\{200,600,1000\}$ for the possible values of $\tau$ and $\mathcal{Q}=\{5,20\}$ for the possible values of $q$. We parameterize the exploration strategy using two subgoals, whose locations are optimized. Subgoal locations are limited to the continuous subset of $\R^2$ which contains the grid, i.e., $\Theta = ([0,10]\times[0,10])^2$ for GW10.

\subsubsection{Recommendation Paths for GW10}
In order to visualize the qualitative behavior of \ALGO, we show in Figure \ref{fig:gw10} the evolution of the recommended subgoals over time (iterations), a concept that we refer to as a \emph{recommendation path}. The plot displays four recommendation path realizations of \ALGO using distinct colors. Within each color, the lightest points are the initial samples while the darker points represent recommendations for larger $n$. Also within each color, the circles represent the first subgoal of the exploration strategy, while the triangles represent the second subgoal. We point out two types of exploration behaviors discovered by \ALGO in Figure \ref{fig:gw10}:
\begin{itemize}
    \item \looseness-1 Behavior `A': The pairs of regions labeled `A' are the final recommendations of the orange, green, and purple sample paths. The strategy leads the agent toward the upper right corner (away from the wall), and then after that, directly towards the goal.
    \item Behavior `B': The final recommendation of the red sample path is labeled by `B.' Note that in behavior `A', a direct path to the first subgoal (upper right corner) is blocked by the random wall for some realizations of the environment. Behavior `B' might be interpreted as a slight remedy of this situation by targeting a lower region of the right edge, creating a more direct path around the wall.
\end{itemize}
Both strategies appear to be reasonable ways for the agent to avoid the door and head to the goal.

\subsection{Larger, Three-Room Windy Gridworlds} 
The second domain (GW20) is a distribution of larger $20\times 20$ gridworlds with three rooms separated by two walls. As shown in Figure~\ref{fig:gw20}, the walls are randomly located in the middle rows (dark gray). A door of size 8 is randomly located somewhere within the wall, shaded in light gray. The starting location is the blue square in the lower left and the goal is displayed in red in the upper right. As in GW10, we optimize the locations of a two-subgoal exploration strategy, with $\Theta = ([0,20]\times[0,20])^2$. The noise due to wind is the same as in GW10. In this experiment, we consider the case of only allowing \ALGO to select the maximum episode length from  $\mathcal{T}=\{4000,7000,10000\}$, while keeping $q=20$ fixed.

\subsubsection{Recommendation Paths for GW20} Recommendation paths are shown in Figure~\ref{fig:gw20}. Unlike the case of GW10, all four of the realizations converge to roughly the same exploration strategy, labeled by `A.' Focusing on the lighter red and orange circles, we can notice a trend of the first subgoal initially being placed (naively) near the goal, but as learning progresses, they move downward toward the entrance of the first door. The second subgoal converges toward the exit of the second door, moving the agent near the goal.

Regarding the placement of the first subgoal near the goal and inducing a direct path, it is worth pointing out this strategy might work for \emph{some} environments (i.e., those where the first door is at its leftmost position and the second door is at its rightmost position). However, \ALGO learns that in order to perform well \emph{across the distribution} of environments, the strategy of first moving rightward is better.

\begin{figure}[tb]
	\centering
	\begin{subfigure}{0.45\textwidth}
		\includegraphics[width=1\linewidth]{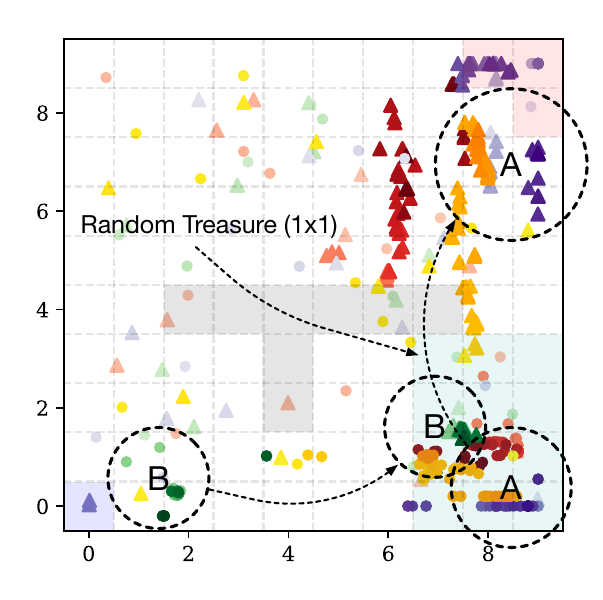}
		\caption{TR Domain}
		\label{fig:it}
	\end{subfigure}
	\begin{subfigure}{0.45\textwidth}
		\includegraphics[width=1\linewidth]{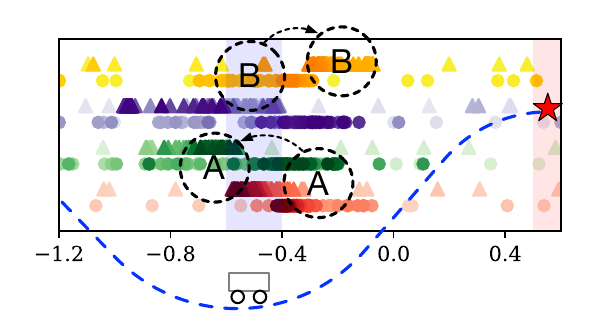}
		\caption{MC Domain}
		\label{fig:mc}
	\end{subfigure}
	\caption{Recommendation paths for TR and MC. The first panel, Figure \ref{fig:it}, largely follows the same design as Figures \ref{fig:gw10} and \ref{fig:gw20}, except that the green squares represent possible location of the treasure. In the second panel, Figure \ref{fig:mc}, since the location of the mountain-car is one-dimensional, we visualize the four recommendation paths by spacing them vertically to avoid crowding (therefore, the vertical axis represents different trajectories, each shown in a different color). The initial location of the car is colored in blue, while the goal is in red, corresponding to the overlay of the mountain. See the caption of Figure~\ref{fig:recommendation_gw} for an explanation of the symbols used.}
	\label{fig:recommendation_treasure_mountain}
\end{figure}

\subsection{Treasure-in-Room} The third domain (TR) is a distribution of $ 10\times 10 $ gridworlds with a ``treasure'' hidden in a small room; see Figure~\ref{fig:it}. The light green area shows the possible positions of the treasure. The agent gets a reward of 10 upon entering the square with treasure, and a reward of 10 upon reaching the goal. The cumulative reward, however, is zero if the agent does not find the goal within the interaction budget. The discount factor is set to $ \gamma=0.98 $ to encourage policies that collect the reward earlier. We set $\mathcal{T}=\{400,1200,2000\}$ and  $\mathcal{Q}=\{5,20\}$.

\subsubsection{Recommendation Paths for TR}
The recommendation paths for TR are shown in Figure \ref{fig:it}. We observe that two strategies were discovered by \ALGO across these four realizations:
\begin{itemize}
    \item Behavior `A': This appears to be the ideal behavior and was discovered in the orange, purple, and red sample paths: first lead the agent to the treasure and then toward the goal through the upper right. It is also notable that the first subgoal is located at the \emph{bottom} of the room, meaning that wherever the treasure turns out to be, the agent can pick it up without backtracking.
    \item Behavior `B': The green sample path's final recommendation coincides with the (apparently suboptimal) exploration strategy denoted by `B' simply leads the agent to the treasure, but does not provide any guidance toward the goal. We highlight that this is an instance where \ALGO's learning is not yet complete, evidenced by the fact that behavior `B' is often recommended in \emph{earlier iterations of the orange sample path}. In that case however, \ALGO eventually discovers behavior `A' in later iterations.
\end{itemize}

\subsection{The Mountain Car Problem (MC)} The mountain car (MC) domain, as we introduced in Example \ref{ex:dimmountain}, is a commonly used RL benchmark environment that tests an agent's ability to explore, as it is required to go in the opposite direction of the goal in order to reach the top of the mountain; see, e.g., \cite[Example 10.1]{sutton2018reinforcement}. For this experiment, we created a distribution of environments $\Xi$ by randomizing the starting location of the agent, which is chosen uniformly from $[-0.6,-0.4]$. Here, we set $\mathcal{T}=\{4000,7000,10000\}$ and $\mathcal{Q}=\{10,50\}$.

\subsubsection{Recommendation Paths for MC}
The subgoal-pairs discovered by \ALGO are shown in Figure~\ref{fig:mc}; they tend to be on opposite sides of the agent's starting location, thereby creating back-and-forth movement needed to generate momentum and move up the mountain. It is worth noting that the symmetric behaviors of going from left to right (Behavior `B' in Figure~\ref{fig:mc}, for the orange sample path) and going from right to left (Behavior `A', exhibited by the green, red, and purple sample paths) can both be found in the results of \ALGO.

\begin{figure}[tb]
	\centering
	\begin{subfigure}{0.45\textwidth}
		\includegraphics[width=1\linewidth]{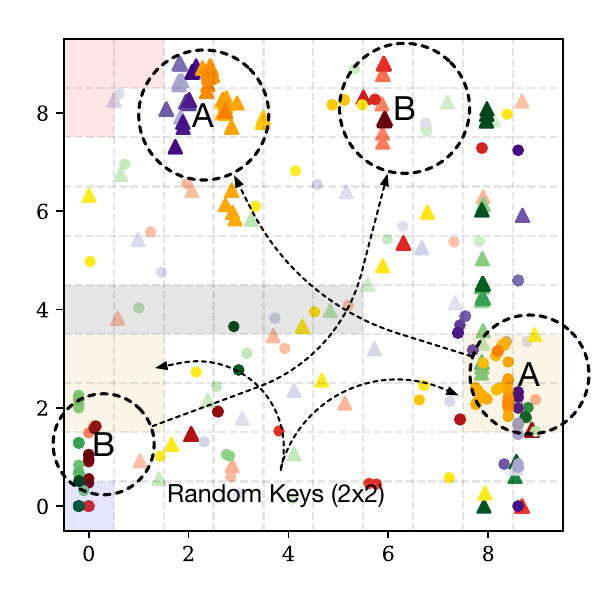}
		\caption{KEY2 Domain}
		\label{fig:ky2}
	\end{subfigure}
	\begin{subfigure}{0.45\textwidth}
		\includegraphics[width=1\linewidth]{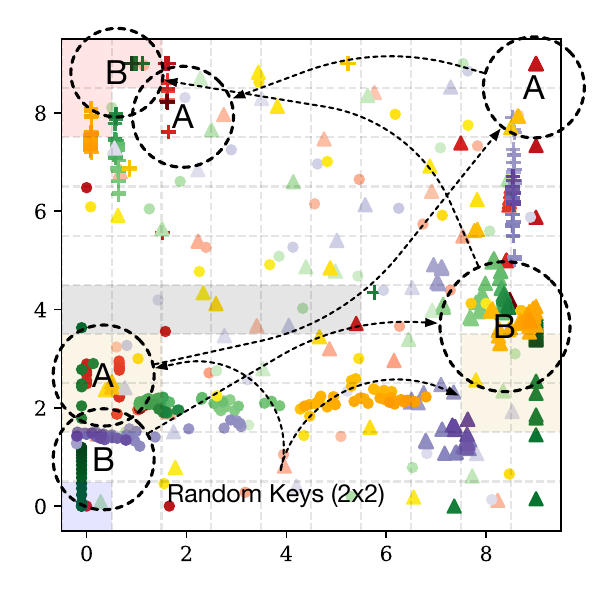}
		\caption{KEY3 Domain}
		\label{fig:ky3}
	\end{subfigure}
	\caption{Recommendation paths. \textcolor{black}{The blue and red shaded regions denote the starting points and goals, respectively. Dark and light gray regions possible locations of walls and doors, respectively. 	Each plot displays four realizations of the ``recommendation paths'' of \ALGO. Each color corresponds to one sample realization, and the color becomes darker as $n$ increases, with the lightest points being the initial samples. The circles, triangles, and crosses represent the first, second, and third subgoals, respectively. The `A' and `B' labels point out two example sets of subgoals displaying notable behaviors.}}
	\label{fig:recommendation_keys}
\end{figure}

\subsection{Key-Door with Highly Varying Key Locations (KEY2 and KEY3)} 
In our last experiment, we test for the situation where the distribution of environments $\Xi$ contains environments that might vary dramatically from one another. We also consider how the exploration behavior changes when we add an additional subgoal to the strategy. 

In domains KEY2 (with two subgoals) and KEY3 (with three subgoals), we consider a $ 10\times 10 $ gridworld with one wall, where a ``key'' needs to be picked up before opening a closed door at the upper-right corner of the grid. The location of the key, however, is highly varying and is either near the left wall or the right wall. The environment is visualized in Figures~\ref{fig:ky2} and \ref{fig:ky3}. We set $\mathcal{T}=\{400,700,1000\}$ and $\mathcal{Q}=\{5,20\}$.

\subsubsection{Recommendation Paths for KEY2/KEY3}
It is important that the agent moves in the \emph{vicinity of both keys} in order for it to perform well across the distribution of environments. We now discuss how this is achieved by the two- and three-subgoal exploration strategies, using the annotations in Figures \ref{fig:ky2} and \ref{fig:ky3}.
\begin{itemize}
    \item Behavior `A' in KEY2 (Figure \ref{fig:ky2}): In the first exploration behavior discovered by \ALGO, the agent is first directed to the right-most key location and then towards the door. This is behavior is reasonable in the sense that the agent's initial location is near the left-most key location; hence, the naive exploration (e.g., $\epsilon$-greedy) ``built-in'' to \RL would likely find the key (if it is there) without additional subgoal rewards.
    \item Behavior `B' in KEY2 (Figure \ref{fig:ky2}): The second exploration behavior that we highlight takes a similar approach. This strategy incentivizes the agent to first check the left-most key location (going upwards from the initial location). Interestingly, the second subgoal is neither the other key location nor the goal: instead, the agent is directed toward the upper edge of the environment, slightly right of center. Upon examination, one might conclude that this path \emph{compromises} between the second key location and the goal. On its way from the first to second subgoal, the agent enters the vicinity of the second key location and also ends up not far from the goal.
    In other words, the exploration strategy puts the agent in a position such that \RL's naive exploration is more likely to be successful. 
    \item \looseness-1 Behavior `A' in KEY3 (Figure \ref{fig:ky3}): With an additional subgoal to work with, \ALGO is able to find more flexible exploration strategies. For behavior `A', we see that the first subgoal is near the left-most key location, the second subgoal indirectly leads the agent toward the vicinity of the right-most key location, and the third subgoal is at the goal. The placement of the second subgoal is reminiscent of behavior `B' of KEY2, but this time, a third subgoal allows \ALGO to directly lead the agent towards the goal
    \item Behavior `B' in KEY3 (Figure \ref{fig:ky3}): This strategy is more intuitive (indeed, more replications converge to behavior `B' than behavior `A') and leads the agent to check each of the possible key locations (the closer one first) and then sends the agent directly toward the goal. 
\end{itemize}

\begin{figure}[h!]
	\centering
	\begin{subfigure}{0.48\textwidth}
		\includegraphics[width=1\linewidth]{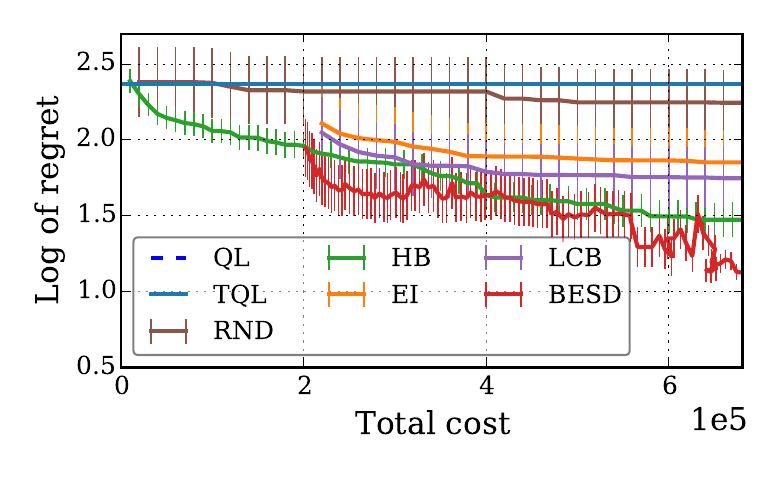}
		\caption{GW10 Domain}
		\label{fig:gw10_error}
	\end{subfigure}
	\begin{subfigure}{0.48\textwidth}
		\includegraphics[width=1\linewidth]{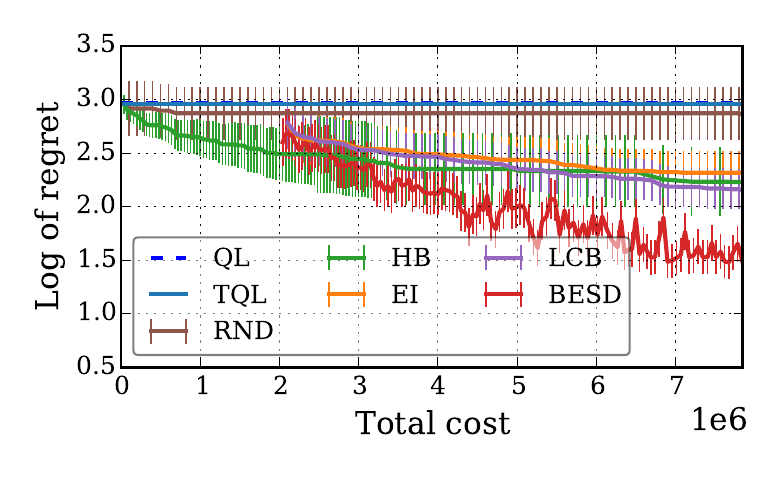}
		\caption{GW20 Domain}
		\label{fig:gw20_error}
	\end{subfigure} \\
	\begin{subfigure}{0.48\textwidth}
		\includegraphics[width=1\linewidth]{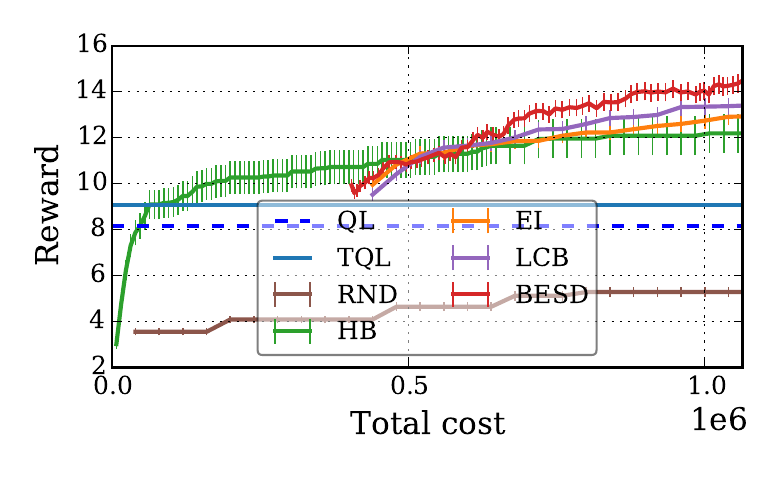}
		\caption{TR Domain}
		\label{fig:it_error}
	\end{subfigure}
	\begin{subfigure}{0.48\textwidth}
		\includegraphics[width=1\linewidth]{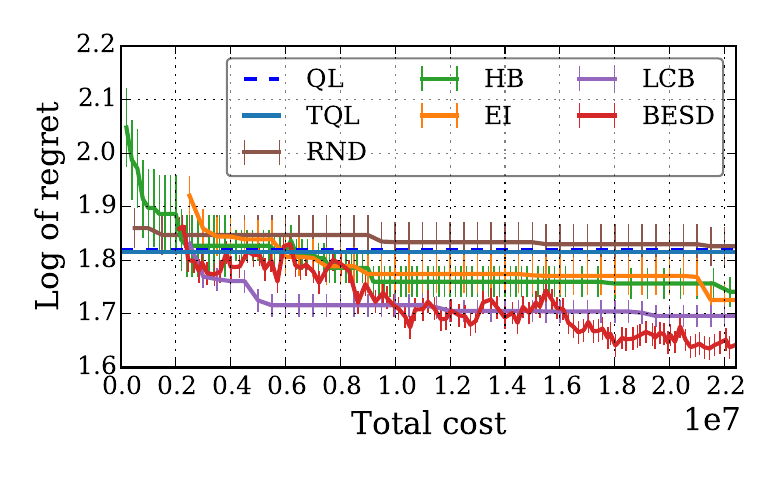}
		\caption{MC Domain}
		\label{fig:mc_error}
	\end{subfigure}\\
	\begin{subfigure}{0.48\textwidth}
		\includegraphics[width=1\linewidth]{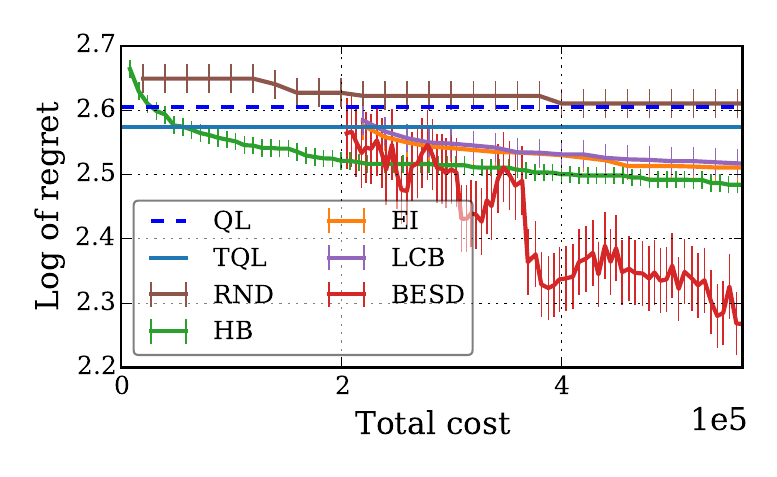}
		\caption{KEY2 Domain}
		\label{fig:ky2_error}
	\end{subfigure}
	\begin{subfigure}{0.48\textwidth}
		\includegraphics[width=1\linewidth]{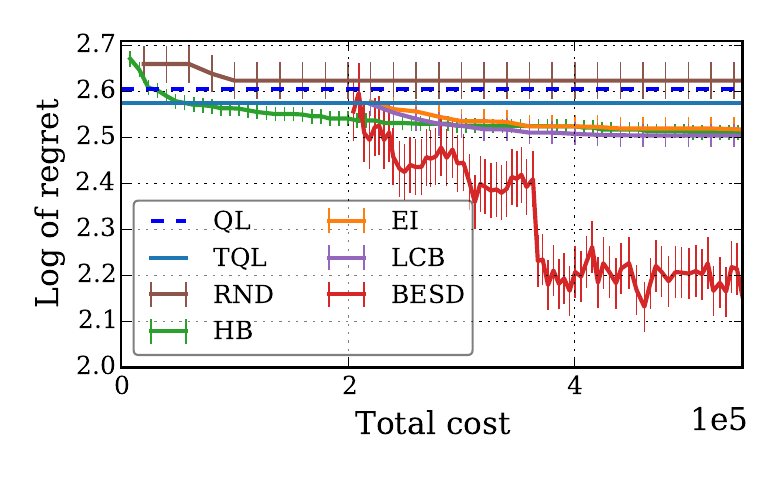}
		\caption{KEY3 Domain}
		\label{fig:ky3_error}
	\end{subfigure}
	\caption{Performance as a function of the total training costs. The curves are averaged over 50 replications of the meta-optimization problem and the error bars indicate $\pm$ 2 standard errors of the mean.  Note that the curves associated with the BO methods, \ALGO, \LCB, \EI, start later due to the use of a set of initial points for initializing the GP model.}
	\label{fig:errorbar}
\end{figure}

\subsection{Baseline Algorithms}
\label{sec:baselines}

Given the somewhat unique positioning of the \ALGO framework, it is important for us to compare against from several streams of literature. Due to our strong focus on cost-efficiency, non-gradient-based approaches from the BO literature are particularly relevant. Two of the most common approaches are \emph{expected improvement} \citep{movckus1975bayesian,jones1998efficient} and \emph{lower confidence bound} (\LCB)\citep{cox1992statistical,srinivas2010gaussian}. 
Expected improvement (\EI) allocates one sample in each round, selecting a point that maximizes the expected improvement beyond currently sampled points: 
\[
\EI(\theta)= \E_n \bigl[ \bigl(\min \{y^1, \ldots, y^n \} - y^{n+1}(\theta, \tau_{\text{max}}) \bigr)^+ \bigr].
\]
In each iteration, we evaluate the \EI selection using $\tau_{\max}$ iterations. \LCB controls the exploration-exploitation trade-off using a ``bonus term'' proportional to the standard deviation at each point: 
\[
\LCB(\theta) = \mu^n(\theta, \tau_{\text{max}}) - \kappa \sqrt{k^n( (\theta, \tau_{\text{max}}), (\theta, \tau_{\text{max}}))}.
\]
The parameter $\kappa$ is set to $2$. Both \EI and \LCB are implemented
using the \GpyOpt package \cite{gpyopt2016}. As a sanity check, we also compare against a baseline where the subgoals are randomly selected at each iteration (\RND), implemented using Latin hypercube sampling \citep{stein1987large}.

We also compare against two ``default RL'' baselines, that do not incorporate an aspect of tuning the exploration strategy. The first baseline is the $Q$-learning algorithm (\QL) \citep{watkins1989learning} with no subgoals or reward shaping: that is, we directly run \QL on environment $\xi^{N}$ for $\tau_\text{max}$ interactions. The second one is a heuristic based on the approximate Q-values learned by \QL, which we call ``transfer'' $Q$-learning (\TQL): for the test instance, we initialize the $Q$-values using the previously stored results from a randomly chosen training environment. This heuristic is inspired by the idea of \emph{policy reuse} proposed in \cite{fernandez2010probabilistic} for transferring learned strategies to new tasks.

An alternative to applying BO or bandit algorithms to hyperparameter optimization is the idea of \emph{adaptive configuration evaluation}, which focuses on improving the throughput of configuration evaluation by quickly eliminating ones that are not promising. From this line of thinking, the Hyperband algorithm (\HB) of \cite{li2017hyperband} stands out as a popular and representative approach. It treats hyperparameter optimization as a pure-exploration infinite-armed bandit problem; it uses sophisticated techniques for adaptive resource allocation and early-stopping to concentrate its learning efforts on promising designs. Setting $\eta=3$ (the default value) and $R=81$, \HB consists of $ \lfloor \log_\eta R \rfloor $ rounds. The first round starts with $R$ samples of subgoal designs $\theta$ from a Latin hypercube sample. Following \HB's motivation of early-stopping unpromising designs, each $\theta$ is evaluated for $ \tau_{\min} $ steps. The best $ 1/\eta $-fraction designs are kept for the next round. In round $i$, Hyperband samples $ R/\eta^{i-1} $ subgoal designs to evaluate for $ \tau_{\min}\, \eta^{i-1} $ steps.

A detailed empirical comparison of \ALGO to baseline algorithms for all environments are given in Figure \ref{fig:errorbar}. The purpose of each numerical experiment is to show that \ALGO is able to, in a cost-efficient manner (where cost is defined as the number of environment interactions), produce exploration strategies that lead to policies that perform well in a randomly drawn test environment. For each replication, to assess the performance at a particular point in the process, we take its latest recommendation and test it by averaging its performance on a random sample of 200 test MDPs (i.e., $\xi^{N}$).	The $x$-axis is the cumulative cost, which includes the initial sampling cost. The $y$-axis is typically the log regret (lower is better), where regret is defined as the number of additional steps needed to reach the goal when compared to the optimal policy. The exception is the TR domain, where the $y$-axis is the discounted reward (higher is better), since in TR, the performance is measured by both reward and steps.

\subsection{Takeaways from Baseline Comparisons in Figure \ref{fig:errorbar}}
\label{sec:baselinediscussion}
We now offer some observations and takeaways from the performance plots of Figures~\ref{fig:gw10_error}-\ref{fig:ky3_error}, where \ALGO is compared to a variety of baseline approaches. In these figures, the $x$-axis shows the total cost, defined to be the total number of environment interactions, and the $y$-axis shows the performance measure (either regret or reward, depending on the environment) when averaged across environments drawn from $\Xi$. In all environments, \ALGO (in red) achieves either the lowest regret or highest reward.
\begin{enumerate}  
     \item \textbf{Sanity checks.} The methods \RND, \QL, and \TQL tend to perform poorly across all domains. This suggests that subgoal-free methods (\QL and \TQL) are unable to achieve cost-efficient exploration. At the same time, the results of \RND suggest that when using subgoals, they must be carefully selected (i.e., exploration based on random subgoals does not perform well).
     
     \item \textbf{Comparison to Hyperband.} \HB is reasonably competitive against \ALGO on two of the easier domains, GW10 and TR. In particular, we notice that \HB tends to have good performance early on (as it is able to use early stopping to quickly eliminate inferior subgoal strategies). However, as the interaction budget grows, we see that in most domains, \ALGO is eventually able to make better use of its evaluations, likely explained by \ALGO's use of a tailored surrogate model.

    \item \looseness-1 \textbf{Comparison to other BO methods.} The popular BO methods \EI and \LCB tend to perform similarly to each other in all domains. Compared to \ALGO, however, they are less cost-efficient. Since all three approaches make use of underlying GP surrogate models, but \EI and \LCB are constrained in always using $q_{\text{max}}\tau_{\text{max}}$ interactions, this is evidence that being able to reduce the episode lengths and the number of replications is valuable.

    \item \textbf{Impact of more subgoals.} Lastly, we point out that Figures~\ref{fig:ky2_error} and \ref{fig:ky3_error} show that although a two-subgoal exploration strategy achieves better results than the baselines, a three-subgoal strategy performs even better. This demonstrates the benefit of expanding the dimension of the parameterization in certain environments. Choosing the number of subgoals to use in a particular set of environments is not an exact science; in general, a higher dimensional subgoal parameterization makes the BO meta-optimization problem more challenging and each acquisition function optimization is also more time-consuming. We recommend the following guidelines: (1) Consider the total interaction budget across all training iterations. A rule-of-thumb is that a $d$-dimensional subgoal parameterization should have $2d-1$ random initial points. The interaction cost of the initial points should be less than $1/3$ of the total budget in order to give \ALGO adequate time to make progress (if the cost of initial points is too high, then one might want to reduce $d$). (2) Optimizing the acquisition function becomes more time consuming as $d$ increases, so $d$ should be small enough such that (\ref{eq:acquisition}) can be computed in one's allotted per-iteration time budget for acquisition function optimization.
\end{enumerate}

\subsection{Dynamic Subgoal Exploration Strategy vs. Learning From Scratch at Test Time} \label{sec:benefit_table}
In Section \ref{sec:numerical}, Figures \ref{fig:recommendation_gw}, \ref{fig:recommendation_treasure_mountain}, and \ref{fig:recommendation_keys} gave visual intuition about the types of exploration behaviors that were discovered by \ALGO. In this section, we show how the final dynamic subgoal strategy $\theta^{N}_\text{rec}$ recommended by \ALGO is able to speed up learning in the test environment, by comparing it to ``learning from scratch'' (i.e., running RL directly in the sparse reward environment, with no subgoals). Let 
\[
\pi_{\xi}^{T} = \RL \bigl[ T, \mathcal M_{\xi} \bigr] \quad \text{and} \quad \pi_{\xi, \, \theta^{N}_\text{rec}}^{T} = \RL \bigl[ T, \mathcal M_{\xi, \, \theta^{N}_\text{rec}} \bigr]
\]
be the policy learned using $\RL$ on the original, sparse-reward environment (i.e., no subgoals) and the policy learned by $\RL$ with the aid of the subgoal strategy found by \ALGO after $T$ test-time interactions. The performance ratio that we are interested in is
\[
\text{performance ratio}(T) = \E\Bigl[  V^{\pi_{\xi, \theta^{N}_\text{rec}}^{T}}(s_0) \Bigr] \; / \; \E\bigl[  V^{\pi_{\xi}^{T}}(s_0) \bigr],
\]
which, stated simply, represents the ratio ``performance with subgoals / performance without subgoals.'' 
On GW10, GW20, MC, KEY2, and KEY3, a smaller performance ratio indicates a more effective exploration strategy. For TR, we measure performance using rewards instead of costs, so a larger performance ratio is better. Table~\ref{tb:performance} displays the performance ratios as a function of the number of interactions used in the test environment. We can see that an optimized exploration strategy corresponds to dramatic improvements, ranging from roughly 3x in the worst cases (MC, KEY2, and KEY3) to nearly 20x in the best cases (GW10, GW20, and TR). Note that due to the varying difficulty between environments, we use a scaling factor $m$ to show how the performance improves with additional cost. The takeaway here is that using a good exploration strategy can lead to dramatic improvements at test-time.

\begin{table}[h]
	\caption{Performance ratios as a function of interactions in the test environment. GW10, GW20 TR, MC, KEY2, and KEY3 are evaluated every $m= 100, 1000, 200, 1000, 500, 500$ steps respectively.} \label{tb:performance}
	\begin{center}
		\begin{tabular}{ccccccc}
		    \hline
			$\tau$ & GW10 & GW20  & TR      & MC    & KEY2  & KEY3    \\ \hline
			$m$   & 0.458 & 0.779 & 0.436   & 0.980 & 1.456 & 1.025  \\
			$2m$   & 0.218 & 0.492 & 2.823   & 1.048 & 0.736 & 0.940  \\
			$3m$   & 0.086 & 0.234 & 2.823   & 0.949 & 1.277 & 0.698  \\
			$4m$   & 0.080 & 0.224 & 0.917   & 0.896 & 0.704 & 0.788  \\
			$5m$   & 0.070 & 0.108 & 6.723   & 0.987 & 1.355 & 0.531  \\
			$6m$   & 0.086 & 0.088 & 8.939   & 0.878 & 0.856 & 0.503  \\
			$7m$   & 0.080 & 0.068 & 9.908   & 1.077 & 0.920 & 0.623  \\
			$8m$   & 0.087 & 0.075 & 10.216  & 0.877 & 0.883 & 0.532  \\
			$9m$   & 0.069 & 0.059 & 23.2936 & 0.512 & 0.232 & 0.566  \\
			$10m$  & 0.069 & 0.058 & 18.011  & 0.354 & 0.332 & 0.361  \\
			\hline
		\end{tabular}
	\end{center}
\end{table}

\section{Conclusion and Future Work}
\label{sec:conclusion}
The problem of finding exploration strategies for a distribution of environments with a strong focus on \emph{cost-awareness during training} has not been adequately studied in the literature. This can be a deterrent to applying RL in real-world settings where interactions with the environment are limited and expensive (and where cheap simulators are not available). This paper proposes a solution based on Bayesian optimization; in a cost-aware manner, our approach finds subgoals with an intrinsic shaped reward that aids the agent in scenarios with sparse and delayed rewards, thereby reducing the number of interactions needed to obtain a good solution. We hope that this approach can help RL become more applicable in real world settings. An experimental evaluation demonstrates that~\ALGO achieves considerably better solutions than a comprehensive field of baseline methods on a variety of benchmark problems. Moreover, an examination of its ``recommendation paths'' shows that \ALGO discovers solutions that induce interesting exploration strategies. There are several exciting directions for extending this paper:
\begin{itemize}
    \item \textbf{Richer BO formulations.} Extensions to the BO formulation could be made in various ways. For example, one interesting direction is to allow the acquisition function to determine the \emph{number of subgoals} as an additional lever. Based on a few informal observations, such a formulation is likely only interesting in settings where \emph{more subgoals incur additional experimentation cost}.\footnote{We ran a small number of informal experiments where we allowed BO to select the number of subgoals, but found that \ALGO almost immediately gravitates to the largest number of subgoals (as subgoals come at no cost). Since in the applications that we have in mind, subgoal cost was not a primary concern, we did not pursue this direction as it did not bring any particularly strong insights for the standard case.} Alternatively, the acquisition function itself could be extended with additional features, such as encouraging successive subgoal evaluations to be nearby previous ones (i.e., to reduce setup cost) or the ability to reason about (known) symmetries in the domain. Such advanced features might be enabled by dynamic programming formulations \emph{of the BO problem itself}, which can be tackled using multi-step lookahead BO \citep{lam2016bayesian,gonzalez2016glasses,jiang2020efficient,lee2020efficient}. Other possiblities include the ability to handle expensive-to-evaluate constraints \citep{gardner2014bayesian,gelbart2014bayesian,letham2019constrained} or total cost budgets \citep{astudillo2021multi,lee2021nonmyopic}.
    
    \item \textbf{Case study in an application domain.} Our experiments gave proof-of-concept results on benchmarks where the RL training itself did not use prohibitive amounts of computation, in order for us to stay within a reasonable computational budget. This is because statistically distinguishable results for baseline algorithms require many replications of the meta-optimization problem (i.e., the BO routines), each of which require many iterations of RL training. One immediate area of future work is to ``productionize'' the dynamic subgoal exploration strategies in a real-world application involving a navigation task.
    
    \item \textbf{The task-aware setting.} Finally, our problem formulation does not include ``labels'' for environments, as our setting is concerned with case of exogenous variation in the environments, but otherwise the same task. The situation often studied in the multi-task RL setting, however, often comes with task identifiers, where the agent knows that it is operating in particular task. An extension to this setting might be useful for certain applications, where exploration strategies that are good for one task (e.g., biking through an environment) are also useful for other tasks (e.g., walking through the same environment).
\end{itemize}

\clearpage

\bibliography{main}
\bibliographystyle{tmlr}

\clearpage
\appendix

\section{Proofs} \label{sec:proof_conv}

\subsection{Restatement of Theorems}
Here we restate the two main theorems from the paper for the reader's convenience. Full proofs are provided in the following sections.

\asymptoticthm*
\boundedsubopt*

\subsection{Proof of Theorem~\ref{thm:f_conv}}
The proof is based on theoretical results of \cite{poloczek2017multi}. Our result, however, includes the ability to select the number of replications $q$. Denote $\lambda(\theta, \tau, q) = \sigma_\text{env}^2 + \sigma_\text{rep}^2/q$. Also, let $\mathscr{F}^n$ denote the $\sigma$-algebra generated by the history $H^n$. The expectation $ \E_n:=\E[\;\cdot\;|\mathscr{F}^n] $ is taken with respect to $ \mathscr{F}^n $. Recall that $\mu^n$ and $k^n$ are the mean and covariance matrix of the time $n$ belief on $f$. Define the quantities
\[
	Z^{n+1} = \frac{ y^{n+1}(\theta,\tau)-\mu^n(\theta,\tau) }{ \sqrt{\text{Var}\bigl [y^{n+1}(\theta,\tau)-\mu^n(\theta,\tau) \,|\,\mathscr{F}^n \bigr] }},
\]
and
\[
\tilde{\sigma}_q^n\bigl((\theta',\tau'),(\theta,\tau)\bigr) = \frac{k^n\bigl((\theta',\tau'), (\theta,\tau)\bigr)}{\sqrt{\lambda(\theta,\tau,q) + k^n\bigl((\theta,\tau), (\theta,\tau)\bigr)}} .
\]
Observe that $Z^{n+1}$ is standard normal (conditional on $\mathscr{F}^n$).
We have the following recursive updating equation for $\mu^{n+1}$:
\begin{align}\label{eq.mu_update}
	\mu^{n+1}(\theta,\tau) = \mu^n(\theta,\tau) + \tilde{\sigma}_{q^{n+1}}^n\bigl((\theta,\tau),(\theta^{n+1},\tau^{n+1})\bigr)\, Z^{n+1},
\end{align}
and another recursive formula $k^{n+1}$:
\begin{equation}
\begin{aligned}
	k^{n+1}\bigl( &(\theta',\tau'),(\theta,\tau) \bigr) = k^n\bigl( (\theta',\tau'),(\theta,\tau) \bigr)\\
	&- \tilde{\sigma}_{q^{n+1}}^n\bigl((\theta',\tau'),(\theta^{n+1},\tau^{n+1})\bigr)  \, \bigl[\tilde{\sigma}_{q^{n+1}}^n \bigl((\theta,\tau), (\theta^{n+1},\tau^{n+1})\bigr)\bigr]^\top. \label{eq.Sigma_update}
\end{aligned}
\end{equation}
These updating equations are based on the  Sherman-Woodbury identity; see \cite{frazier2009knowledge} for a full derivation.
The objective of the acquisition function is thus:
\begin{align}
	\frac{\nu^n(\theta, \tau, q)}{q \tau} &= \frac{1}{q\tau} \; \E_n \bigl[ (\mu^{n+1}_*  - \mu^{n}_*) \,|\, (\theta^{n}, \tau^{n}, q^n) =(\theta, \tau, q) \bigr] \nonumber \\
	&= \begin{aligned}[t]\frac{1}{q\tau} \; \E_n \Bigl[ \max_{\theta'} \bigl\{ \mu^n(\theta', &\tau_\text{max}) + \tilde{\sigma}_{q}^n \bigl((\theta',\tau_\text{max}), (\theta,\tau)\bigr) \, Z^{n+1} \bigr\}\\
	&- \max_{\theta'}\mu^n(\theta',\tau_\text{max}) \, \Bigl| \, (\theta^{n}, \tau^{n}, q^n) =(\theta, \tau, q)\Bigr].
	\end{aligned}
	\label{eq.obj_prime}
\end{align}
We also define the quantity 
\begin{equation}
	V^n(\theta,\tau,\theta',\tau') = \E_n[f(\theta,\tau) \cdot f(\theta',\tau')] = k^n\bigl( (\theta,\tau),(\theta',\tau') \bigr) + \mu^n(\theta,\tau) \cdot \mu^n(\theta',\tau').
	\label{eq:Vdef}
\end{equation}
Next, we restate a useful technical lemma from \cite{poloczek2017multi}.

\begin{lemma}[Restatement of Lemma 1 of \citep{poloczek2017multi}] \label{lemma:mu_conv}
	Let $ \tau,\tau'\in \mathcal{T} $ and $ \theta,\theta'\in \Theta $. The limits of the series $ \{ \mu^n(\theta,\tau) \}_n $ and $ \{ V^n(\theta,\tau,\theta',\tau') \}_n $ exist. Denote them by $ \mu^\infty(\theta,\tau) $ and $ V^\infty(\theta,\tau,\theta',\tau') $ respectively. We have
	\begin{align}
		\lim_{n\rightarrow \infty} \mu^n(\theta,\tau) &= \mu^\infty(\theta,\tau), \\
		\lim_{n\rightarrow \infty} V^n(\theta,\tau,\theta',\tau') &= V^\infty(\theta,\tau,\theta',\tau')
	\end{align}
	almost surely. If $ (\theta',\tau') $ is sampled infinitely often, then 
	\[
	\lim_{n\rightarrow \infty} V^n(\theta,\tau,\theta',\tau') = \mu^\infty(\theta,\tau) \cdot \mu^\infty(\theta',\tau').
	\]
\end{lemma}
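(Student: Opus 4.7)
The plan is to establish the three claims by exploiting the fact that both $\mu^n$ and $V_n$ are Doob/Lévy martingales with respect to the filtration $\{\mathscr{F}^n\}$, and then to control the posterior covariance $k^n$ using the recursive update in \eqref{eq.Sigma_update}.

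First, the key observation is that under the Bayesian model, $\mu^n(\theta,\tau) = \E[f(\theta,\tau) \mid \mathscr{F}^n]$ and $V_n(\theta,\tau,\theta',\tau') = \E[f(\theta,\tau)\cdot f(\theta',\tau') \mid \mathscr{F}^n]$ are both Lévy martingales (the tower property makes this immediate). Since $f$ is a Gaussian process, both $f(\theta,\tau)$ and the product $f(\theta,\tau)\cdot f(\theta',\tau')$ lie in $L^2$ (jointly Gaussian random variables have finite moments of all orders, and the second moment of the product is bounded in terms of the prior covariance entries). Lévy's upward theorem then immediately yields a.s. (and $L^1$) convergence of $\mu^n$ and $V_n$ to well-defined limits $\mu^\infty$ and $V_\infty$, establishing the first two claims.

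For the third claim, I would use the identity $V_n(\theta,\tau,\theta',\tau') = k^n((\theta,\tau),(\theta',\tau')) + \mu^n(\theta,\tau)\,\mu^n(\theta',\tau')$. Since the second term converges a.s. to $\mu^\infty(\theta,\tau)\cdot\mu^\infty(\theta',\tau')$, it suffices to show that $k^n((\theta,\tau),(\theta',\tau')) \to 0$ a.s.\ whenever $(\theta',\tau')$ is sampled infinitely often. From \eqref{eq.Sigma_update}, the diagonal term $k^n((\theta',\tau'),(\theta',\tau'))$ is monotonically nonincreasing in $n$, so it converges to some limit $K^\infty \ge 0$. Restricting to the subsequence of iterations in which $(\theta',\tau')$ itself is sampled, the one-step variance reduction equals
\begin{equation*}
    \frac{[k^n((\theta',\tau'),(\theta',\tau'))]^2}{\lambda(\theta',\tau')/q^{n+1} + k^n((\theta',\tau'),(\theta',\tau'))}.
\end{equation*}
Because $q^{n+1}$ lies in the bounded set $\mathcal{Q}$ (so $\lambda/q$ is bounded above) and the sum of per-step reductions is finite, summability forces the summand to vanish along this subsequence; combined with the limit existing, this yields $K^\infty = 0$. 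Finally, Cauchy--Schwarz for the positive semidefinite posterior kernel gives $|k^n((\theta,\tau),(\theta',\tau'))| \le \sqrt{k^n((\theta,\tau),(\theta,\tau))\cdot k^n((\theta',\tau'),(\theta',\tau'))}$, and the right-hand side tends to $0$ since the first factor is monotonically nonincreasing (hence bounded) and the second factor tends to $0$.

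The main obstacle I anticipate is the last step: rigorously extracting $K^\infty = 0$ from the variance-reduction recursion. One must verify that the per-iteration decrements are summable (which follows from the telescoping sum being bounded by the prior variance) and then handle the restriction to the subsequence of iterations sampling $(\theta',\tau')$ carefully, using that $\mathcal{Q}$ is bounded so the denominator $\lambda(\theta',\tau')/q^{n+1} + k^n((\theta',\tau'),(\theta',\tau'))$ is uniformly bounded. Once this is in place, everything else is a clean application of standard martingale theory and kernel positive semidefiniteness, and no further hypotheses on the sampling policy are needed.
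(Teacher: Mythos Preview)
Your argument is correct and is exactly the standard route: Doob/L\'evy martingale convergence for $\mu^n$ and $V_n$ (both integrands have all moments under the GP prior), followed by driving the posterior variance $k^n((\theta',\tau'),(\theta',\tau'))$ to zero via the telescoping bound on the one-step reductions from~\eqref{eq.Sigma_update}, and then Cauchy--Schwarz to kill the cross term. The only cosmetic point is that you should also cover $\lambda(\theta',\tau')=0$, but there the first sample already sets the posterior variance to zero, so the conclusion is immediate.

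As for comparison with the paper: the paper does not supply a proof of this lemma at all --- it is stated as ``Lemma~1 of \cite{poloczek2017multi}'' and used as a black box. Your write-up is essentially the argument that underlies the cited result, so there is nothing to contrast; you have simply filled in what the paper outsourced.
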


Fix a sample path $\omega$, which corresponds to a particular path of measurements and observations
\[
\{(\theta^n, \tau^n, q^n, y^{n+1}(\theta^n, \tau^n, q^n))\}_n.
\]
By the finiteness of $\bar{\Theta}$, $\mathcal{T}$, and $\mathcal{Q}$, there must exist a configuration $(\theta', \tau', q')$ that is visited infinitely often on sample path $\omega$. The following lemma states the asymptotic behavior of $\nu^n(\theta',\tau',q')/(q'\tau')$ for $n\rightarrow\infty$ as a function of $\mu^n(\cdot,\cdot)$ and $ \tilde{\sigma}_{\cdot}^n\bigl( (\cdot,\cdot), (\cdot,\cdot) \bigr) $.

\begin{lemma} \label{lemma:obj_to_0}
	Consider the sample path $\omega$ and $(\theta', \tau', q')$ described above. Then, on that sample path $\omega$, it holds that 
	\[
	\lim_{n\rightarrow\infty} \tilde{\sigma}_{q'}^n\bigl( (\theta'',\tau_\text{max}), (\theta',\tau') \bigr) = 0
	\]
	for every $\theta''\in \Theta$. Also, the acquisition value tends to zero: $\lim_{n\rightarrow\infty} \nu^n(\theta',\tau',q')/(q'\tau') = 0$
\end{lemma}
\begin{proof}
	It follows from Lemma~\ref{lemma:mu_conv} that 
	\[
	k^n \bigl( (\theta,\tau),(\theta',\tau') \bigr) = \E_n[f(\theta,\tau)\cdot f(\theta',\tau')] - \mu^n(\theta,\tau) \cdot \mu^n(\theta',\tau') \xrightarrow{ n \to \infty } 0
	\]
	for any $\theta \in \Theta$, $\tau\in \mathcal T$.
Then for all $ \theta''\in\bar{\Theta} $, we have 
	\begin{equation*}
		\lim_{n \rightarrow \infty} \tilde{\sigma}_{q'}^n\bigl( (\theta'',\tau_\text{max}),(\theta',\tau') \bigr)
		= \lim_{n \rightarrow \infty} \frac{k^{n}\bigl( (\theta'',\tau_\text{max}),(\theta',\tau') \bigr)}{\sqrt{ \lambda\bigl(\theta',\tau',q'\bigr) + k^{n}\bigl( (\theta',\tau'),(\theta',\tau') \bigr) }} = 0.
	\end{equation*}
	Note that we made use of the fact that the observation noise $ \lambda(\theta',\tau',q')>0 $ for any $q'$. From the proof of Lemma 1 of \cite{poloczek2017multi}, it is shown that for any $\theta'' \in \bar{\Theta}$,
	\[
	\bigl\{\mu^n(\theta'',\tau_\text{max})\bigr\}_{n}  \quad \text{and} \quad 
	\bigl \{ \tilde{\sigma}_{q'}^n \bigl( (\theta'',\tau_\text{max}),(\theta',\tau') \bigr) \bigr\}_{n}
	\]
	are uniformly integrable (u.i.) families of random variables that converge almost surely to their respective limits $\mu^\infty(\theta'',\tau_\text{max})$ and $\tilde{\sigma}_{q'}^\infty\bigl((\theta'',\tau_\text{max}),(\theta',\tau')\bigr)=0$. 
	Note that the family of random variables $ \bigl\{ \tilde{\sigma}_{q'}^n\bigl( (\theta'',\tau_\text{max}),(\theta',\tau',) \bigr) \, Z^{n+1} \bigr \}_n $ is also uniformly integrable since $Z^{n+1}$ is independent of $ \tilde{\sigma}_{q'}^n\bigl( (\theta'',\tau_\text{max}),(\theta',\tau') \bigr)$.
	Let $Z$ be a standard normal random variable (independent from all other quantities). It holds that
	\begin{align}
		\lim_{n \rightarrow \infty} &\frac{\nu^n(\theta',\tau',q')}{q'\tau'}\nonumber \\
		&= \begin{aligned}[t]\frac{1}{q'\tau'} \Bigl[ &\int_{-\infty}^{+\infty} \phi(Z)\, \max_{\theta''\in\bar{\Theta}} \Bigl\{\mu^{\infty}(\theta'',\tau_\text{max}) + \tilde{\sigma}_{q'}^\infty\bigl( (\theta'',\tau_\text{max}),(\theta',\tau') \bigr) \, Z\Bigr\} \, dZ \\
		&- \max_{\theta''\in\bar{\Theta}} \, \mu^\infty(\theta'',\tau_\text{max}) \Bigr]\end{aligned} \label{eq:besd_lim}\\
		&= 0. \nonumber
	\end{align}
	The first equality is due to (\ref{eq.obj_prime}) and the fact that the operations of summing and taking maximum over a finite set of uniform integrable random variables maintains uniform integrability.
\end{proof}

From (\ref{eq:acquisition}), we know that in each iteration $n$, the configuration $ (\theta^{n},\tau^{n},q^{n})$ is selected from according to $\argmax_{\theta,\tau,q} \nu^n(\theta,\tau,q)/(q\tau)$.
Now, for the sake of contradiction, suppose that there exists some configuration $(\breve{\theta},\breve{\tau}, \breve{q})$ such that $\lim_{n\rightarrow\infty}\nu^n(\breve{\theta},\breve{\tau},\breve{q})/(\breve{q}\breve{\tau}) > 0$. This immediately leads to a contradiction, since then it cannot be the case that $(\theta', \tau', q')$ is visited infinitely often. 

Since the sample path $\omega$ was arbitrary, we conclude that 
\begin{equation}
\lim_{n\rightarrow\infty} \nu^n(\theta,\tau,q)/(q\tau) = 0 \quad \text{a.s.}
\label{eq:besdzero}
\end{equation}
for all $\theta\in\bar{\Theta}$, $\tau\in\mathcal{T}$, and $q\in\mathcal{Q}$.

\begin{lemma}\label{lemma:mu_f}
Given that (\ref{eq:besdzero}) holds,
we have that 
\[
\argmax_{\theta\in\bar{\Theta}} \mu^{\infty}(\theta,\tau_\text{max}) = \argmax_{\theta\in\bar{\Theta}} f(\theta,\tau_\text{max})
\]
almost surely.
\end{lemma}
\begin{proof}
	We can conclude from (\ref{eq:Vdef}) and Lemma~\ref{lemma:mu_conv} that
	\[
	 \lim_{n\rightarrow\infty} k_n\bigl( (\theta,\tau_\text{max}), (\theta,\tau_\text{max}) \bigr) = k^{\infty}\bigl( (\theta,\tau_\text{max}),(\theta,\tau_\text{max}) \bigr) \quad \text{a.s.}
	 \]
	 for all $\theta\in\bar{\Theta}$. In the case that the posterior variance $ k^{\infty}( (\theta,\tau_\text{max}),(\theta,\tau_\text{max}) ) = 0 $ for all $ \theta\in\bar{\Theta} $, then the maximizer is known perfectly and we are done.
	 
	 If not, then we define $ \hat{\Theta} = \bigl\{ \theta\in \bar{\Theta} \,|\, k^{\infty}( (\theta,\tau_\text{max}), (\theta,\tau_\text{max}) ) > 0 \bigr\} $ and consider some $\hat{\theta}\in \hat{\Theta}$ where the posterior variance is positive. Fix any $\hat{q} \in \mathcal Q$. 
	We now argue that
	\begin{equation}
	    \tilde{\sigma}_{\hat{q}}^{\infty}\bigl( (\hat{\theta},\tau_\text{max}),(\hat{\theta},\tau_\text{max}) \bigr) = \tilde{\sigma}_{\hat{q}}^{\infty}\bigl( (\theta'',\tau_\text{max}),(\hat{\theta},\tau_\text{max}) \bigr)
	    \label{eq:all_equal}
	\end{equation}
	for all $\theta'' \in \bar{\Theta}$.
	Suppose, for the sake of contradiction, that there exist some $\theta_1, \theta_2 \in\bar{\Theta} $ with 
	\begin{equation}
		\tilde{\sigma}_{\hat{q}}^{\infty}\bigl( (\theta_1,\tau_\text{max}), (\hat{\theta},\tau_\text{max}) \bigr) \ne \tilde{\sigma}_{\hat{q}}^{\infty}\bigl( (\theta_2,\tau_\text{max}), (\hat{\theta},\tau_\text{max}) \bigr).
		\label{eq:nonequal_thetas}
	\end{equation}
	Recall (\ref{eq:besd_lim}) and note that it can be rewritten as
	\begin{equation}
	\lim_{n \rightarrow \infty} \frac{\nu^n(\theta',\tau',q')}{q'\tau'} = \frac{1}{q'\tau'} \Bigl[ \mathbb{E} \bigl[ h(Z) \bigr] - \max_{\theta''\in\bar{\Theta}} \, \mu^\infty(\theta'',\tau_\text{max}) \Bigr],
	\label{eq:besd_rewritten}
	\end{equation}
	where $h(z) = \max_{\theta''\in\bar{\Theta}} \Bigl\{\mu^{\infty}(\theta'',\tau_\text{max}) + \tilde{\sigma}_{q'}^\infty\bigl( (\theta'',\tau_\text{max}),(\theta',\tau') \bigr) \, z\Bigr\}$. Since $\bar{\Theta}$ is finite and each function within the maximization in $h$ is affine in $z$, the $h(z)$ is convex\footnote{Pointwise maximum of convex functions is convex.} and piecewise linear. Since $h$ is convex, there is an affine function $l$ such that 
	\[
	l(0) = h(0), \quad l(z) \le h(z) \;\;  \text{for all} \;\; z \in \mathbb R.
	\]
	The assumption we made in (\ref{eq:nonequal_thetas}), which effectively says that $h$ is created by taking maximum over affine functions of \emph{differing slopes}, implies $h$ cannot itself be affine (and indeed, must consist of various ``pieces''). Therefore, there exists an interval $\mathcal I$, either of the form $(z_0, \infty)$ or $(-\infty, z_0)$, such that $l(z) < h(z)$ for $z \in \mathcal I$. It follows that $\mathbb{E}[ l(Z)] < \mathbb{E} [h(Z)]$. By the linearity of $l$, we have
	\[
	\mathbb{E}[l(Z)] = l(\mathbb{E}[Z]) = l(0) = h(0) = \max_{\theta''\in\bar{\Theta}} \mu^{\infty}(\theta'',\tau_\text{max}) < \mathbb{E} \bigl[h(Z)\bigr].
	\]
	This implies that (\ref{eq:besd_rewritten}) is strictly positive, contradicting (\ref{eq:besdzero}). We thus conclude that (\ref{eq:all_equal}) holds, which is equivalent to
	\begin{align*}
		\frac{ k^{\infty}\bigl( (\theta'',\tau_\text{max}), (\hat{\theta},\tau_\text{max}) \bigr) }{ \sqrt{ \lambda(\hat{\theta},\tau_\text{max},\hat{q}) + k^{\infty}\bigl( (\hat{\theta},\tau_\text{max}), (\hat{\theta},\tau_\text{max}) \bigr) }} 
		= \frac{ k^{\infty}\bigl( (\theta''',\tau_\text{max}), (\hat{\theta},\tau_\text{max}) \bigr) }{ \sqrt{ \lambda(\hat{\theta},\tau_\text{max},\hat{q}) + k^{\infty}\bigl( (\hat{\theta},\tau_\text{max}), (\hat{\theta},\tau_\text{max}) \bigr) }},
	\end{align*}
	for all $ \theta'',\theta'''\in\bar{\Theta} $.
	Moreover, since $\hat{\theta}$ was chosen from $\hat{\Theta}$, we know that 
	\begin{equation*}
		\lambda(\hat{\theta},\tau_\text{max},\hat{q}) + k^{\infty}\bigl( (\hat{\theta},\tau_\text{max}), (\hat{\theta},\tau_\text{max}) \bigr) > 0,
	\end{equation*}
	and hence $ k^{\infty}\bigl( (\theta''',\tau_\text{max}), (\hat{\theta},\tau_\text{max}) \bigr) = k^{\infty}\bigl( (\theta'',\tau_\text{max}), (\hat{\theta},\tau_\text{max}) \bigr) $ for all $ \theta'',\theta'''\in\bar{\Theta} $. 
	
	This means the covariance matrix of $ \{f(\theta,\tau_\text{max}) \, |\, \theta\in \bar{\Theta}\} $ is proportional to the all-ones matrix, and that draws from $ f(\theta,\tau_\text{max}) - \mu^{(\infty)}(\theta,\tau_\text{max}) $ are \emph{constant} across $ \theta\in\bar{\Theta} $. Therefore, $ \argmax_{\theta\in\bar{\Theta}} \mu^{(\infty)}(\theta,\tau_\text{max}) = \argmax_{\theta\in\bar{\Theta}} f(\theta,\tau_\text{max}) $ and the statement of the theorem holds.
\end{proof}

\subsection{Proof of Theorem~\ref{thm:convergence}} 

In Theorem~\ref{thm:convergence}, we establish an additive bound on the loss of the solution obtained by \ALGO, $f(\bar{\theta},\tau_\text{max})$, with respect to the unknown optimum $f(\theta^\text{OPT},\tau_\text{max})$, as the number of iterations $N\rightarrow\infty$. Recall that we suppose $ \mu(\theta,\tau)=0 $ for all $\theta,\tau$, and that the kernel $k(\cdot,\cdot)$ has continuous partial derivatives up to the fourth order. 
According to Theorem~3.2 of \cite{lederer2019uniform}, for any $\delta\in(0,1]$, with probability at least $1-\delta$, the quantity 
\[
\|L_\delta\| = \bigl\|( L_\delta^1,  L_\delta^2, \cdots, L_\delta^m )\bigr\|
\]
 is a Lipschitz constant of $f$ on $\Theta$ , i.e., it holds that 
 \[
 |f(\theta,\tau_\text{max}) - f(\theta',\tau_\text{max})| \leq \|L_\delta\| \cdot \operatorname{dist}(\theta, \theta'),
 \]
where $\theta,\theta' \in \Theta$. By the definition of $d$, there exists a $ \bar{\theta}\in \bar{\Theta} $ such that $\operatorname{dist}(\bar{\theta},\theta^\text{OPT}) \leq d$. Therefore, it follows that the suboptimality due to optimizing in $\bar{\Theta}$ is bounded by
\begin{equation} \label{eq.f_dist}
	f(\theta^\text{OPT},\tau_\text{max}) - f(\bar{\theta},\tau_\text{max}) \leq \|L_\delta\| \cdot d .
\end{equation}
Theorem \ref{thm:f_conv} completes the proof of Theorem \ref{thm:convergence} since (\ref{eq.f_dist}) holds with probability $1-\delta$.

\section{GP Hyperparameter Estimation} \label{section_fidelity_hypers}

The hyperparameters of the covariance function $k$ are set via \emph{maximum a posteriori}~(MAP) estimation. 
Recall that a MAP estimate is the mode under the log-posterior obtained as the sum of the log-marginal likelihood of the observations and the logarithm of the probability under a hyper-prior.
We focus on describing the hyper-prior, since the log-marginal likelihood follows canonically; see~\cite[Ch.~5]{rw06} for details. The proposed prior extends the hyper-prior for the multi-task GP model used in~\cite{poloczek2017multi}.
We set the mean function~$\mu$ and the noise function~$\lambda$ to constants that we estimate. 
For the covariance function we need to estimate~$d+5$ hyperparameters: the signal variance, one length scale for every subgoal parameter in~$\theta$ and the four parameters associated with~$k_\tau$.
We suppose a normal prior for these parameters.
For the signal variance, the prior mean is given by the variance of the observations, after subtracting the above estimate for the observational noise. Here we use the independence of observational noise that we argued in Section~\ref{obs_noise_is_iid}.
For any length scale, we set the prior mean to the size of the interval that the associated parameter is chosen in.
Having determined a prior mean~$\mu_\psi$ for each hyperparameter~$\psi$, we may then set the variance of the normal prior to~$\sigma_\psi^2 = (\mu_\psi/{2})^2$.

\end{document}